\newtheorem{example}{Example}[section]
\newtheorem{remark}{Remark}[section]
\newcommand{\R}{\mathbb R}
\newcommand{\Cplx}{\mathbb C}
\newcommand{\Cauchy}{\mathcal{C}}
\newcommand{\Hardy}{\mathcal{H}}
\newcommand{\AC}{\mathcal{A}}
\newcommand{\BV}{\mathcal{B}}
\newcommand{\FFT}{\mathbb{F}}
\newcommand{\eps}{\epsilon}
\newcommand{\pol}{\lambda}
\newcommand{\mb}[1]{\left(\begin{array}{#1}}
\newcommand{\me}{\end{array}\right)}
\newcommand{\eqb}{\begin{equation}}
\newcommand{\eqe}{\end{equation}}
\newcommand{\eqab}{\begin{eqnarray}}
\newcommand{\eqae}{\end{eqnarray}}
\newcommand{\bflambda}{{\boldsymbol\lambda}}
\newcommand{\bfpol}{{\boldsymbol\lambda}}
\newcommand{\bfphi}{{\boldsymbol\phi}}
\newcommand{\bfvarphi}{{\boldsymbol\varphi}}
\newcommand{\bfeps}{{\boldsymbol\epsilon}}
\newcommand{\imunit}{\dot{\imath\!\imath}}
\newcommand{\broje}{\mathbf{e}}
\newcommand{\bfB}{ {\mathbf B} }
\newcommand{\bfC}{ {\mathbf C} }
\newcommand{\bfE}{ {\mathbf E} }
\newcommand{\IC}{ {\mathbb C} }
\numberwithin{equation}{section}\numberwithin{equation}{section}
\title{Quadrature-Based Vector Fitting: \\ Implications For $\mathcal{H}_2$ System Approximation}
\author{Z.~Drma\v{c}\thanks{Faculty of Science, Department of Mathematics, University of Zagreb,
Bijeni\v{c}ka 30, 10000 Zagreb, Croatia.} \and S.~Gugercin\thanks{Department of Mathematics,
Virginia Polytechnic Institute and State University,
460 McBryde, Virginia Tech,
Blacksburg, VA 24061-0123.} \and C.~Beattie$^\dagger$}
\begin{document}
\maketitle

\begin{abstract}
Vector Fitting is a popular method of  constructing rational approximants designed to fit given frequency response measurements.
The original method, which we refer to as \textsf{VF}, is based on a least-squares fit to the measurements by a rational function, using an iterative reallocation of the poles of the approximant.
We show that one can improve the performance  of \textsf{VF} significantly,
by using a particular choice of frequency sampling points and properly weighting their contribution
based on quadrature rules that connect the least squares objective with an $\Hardy_2$  error measure.
Our modified approach, designated here as \textsf{QuadVF},  helps recover the original transfer function
with better global fidelity  (as measured with respect to the $\Hardy_2$ norm),
than the localized least squares approximation implicit in \textsf{VF}.
We extend the new framework also to incorporate derivative information, leading to rational approximants that minimize system error with respect to a discrete Sobolev norm. 
We  consider the convergence behavior of both \textsf{VF} and \textsf{QuadVF} as well, and evaluate potential numerical ill-conditioning of the
underlying least-squares problems. 
We  investigate  briefly \textsf{VF} in the case of noisy measurements and propose a new formulation
 for the resulting approximation problem.
Several numerical examples are provided to support the theoretical discussion.
\end{abstract}

\begin{keywords}
least squares, frequency response, model order reduction, vector fitting, transfer function
\end{keywords}

\begin{AMS}
34C20, 41A05, 49K15, 49M05, 93A15, 93C05, 93C15
\end{AMS}

\section{Introduction}\label{S=Intro}
In many engineering applications, the dynamics that govern phenomenae
of interest may be inaccessible to direct modeling, yet there may be an abundance of
accurate frequency response measurements available.
In such cases, one may build up an empirical dynamical system model that fits the measured frequency response data.  This empirical system may then be used as a surrogate to predict behavior or derive control strategies.

In other settings, one may have complete access to the underlying dynamical system of interest at least in principle (e.g., it may be an analytically derived computational model), however the full system may be a complex aggregate of many large subsystems, each perhaps representing diverse physics, and so it may be of such complexity that direct manipulation of the dynamical system is infeasible; potentially only simulation results would be available.   Here, one may wish to capture the dominant dynamic features of the full aggregate system and realize them with a derived dynamical system (presumably of lower order) that can replicate the response characteristics of the full aggregate system.   As before, this derived dynamical system may then be used as an efficient surrogate for the full system in contexts where performance is sensitive to model order.

A natural formulation of this task leads one to a data fitting problem using rational functions and this ultimately is our principal focus.  For convenience, we assume that the system of interest is a single-input/sin\-gle-output (SISO) linear time-invariant system associated with a transfer function, $H(s)$, that is unknown but accessible to sampling in the sense that measurements (magnitude and phase) of $H(s)$ at predetermined points, $s=\xi_1,\,\ldots,\,\xi_{\ell}$ are available.  Indeed, the values of $H(\xi_j)$, for $j=1,\ldots,\ell$ will be the only information presumed available for the system of interest.   These values may have been obtained from experimentally measured amplitude and phase responses at $\xi_j = \imunit\, 2\pi f_j$ associated with (real) driving frequencies, $f_1,\ldots, f_{\ell}$ or
 they may have been extracted via simulation from a computational model.

We derive a dynamical system (or equivalently, its transfer function)
by least squares (LS) data fitting:
 Denote by $\mathcal{R}_r$ the set of proper rational functions of order $r$
(i.e., with denominator having polynomial order $r$ and numerator having polynomial order less than $r$).
Fix $\ell$ sample points, $\{\xi_j\}_{1}^{\ell} \in \IC$, and weights, $\rho_j >0$, for $j=1,\ldots,\ell$.
The problem we address is stated succinctly as:
\begin{equation} \label{GenVFprob}
\begin{array}{c}
\mbox{Find } H_r^{\star}(s)\in \mathcal{R}_r \mbox{ such that }
\sum_{j=1}^{\ell} \rho_j\, \left| H_r^{\star}(\xi_j) - H(\xi_j) \right|^2 \longrightarrow \mbox{min} \\[.1in]
(\mbox{i.e., for all } H_r(s)\in \mathcal{R}_r,\quad
\sum_{j=1}^{\ell} \rho_j\, \left| H_r^{\star}(\xi_j) - H(\xi_j) \right|^2 \leq
\sum_{j=1}^{\ell} \rho_j\, \left| H_r(\xi_j) - H(\xi_j) \right|^2)
\end{array}
\end{equation}
Typically, all $\rho_j=1$ (the ``unweighted" case).  We will be interested in strategies that take advantage of other choices for
$\rho_j$ (which may lead to particular choices for $\xi_j$, as well).  Rational data fitting strategies brought into the service of systems identification in this way have a long history going back at least to Kalman \cite{Kalman-1958}, who computed a best least squares fit with sampled input and output data using rational functions of the form
$\sum_{j=1}^{r} a_j z^{-j}/(1+\sum_{j=1}^r b_j z^{-j})$  (in the $z$-transform domain).

Levy \cite{Levy-1959} considered (\ref{GenVFprob}), taking the rational approximants, $H_r$, to be in polynomial form:
\begin{equation} \label{LevyPolyForm}
 H_r(s)=\frac{n(s)}{d(s)}\mbox{ with }n(s)= \sum_{j=0}^{r-1}\alpha_j s^j \mbox{ and }d(s)= 1+ \sum_{j=1}^r \beta_j s^j.
 \end{equation}
Since the set of rational functions, $\mathcal{R}_r$, is not an affine set (indeed, not even convex), (\ref{GenVFprob}) is both nonlinear
and nonconvex, leading possibly to a host of local minima.
Noting first that
\begin{equation} \label{WtdLSObjective}
\sum_{j=1}^{\ell} \, \left| H_r(\xi_j) - H(\xi_j) \right|^2= \sum_{i=1}^\ell \frac{1}{ |d(\xi_i)|^2}\left| n(\xi_i) - d(\xi_i)H(\xi_i) \right|^2 ,
 \end{equation}
Levy proposed replacing (\ref{GenVFprob}) with the simpler
problem of minimizing $\sum_{i=1}^\ell |n(\xi_i) - d(\xi_i)H(\xi_i)|^2$;
an LS problem which is linear in the coefficients $\{\alpha_j\}$, $\{\beta_j\}$.
Sanathanan and Koerner \cite{Sanathanan-Koerner-1963}
argued against this tactic and provided a convincing example that such a simplification is problematic.
They  suggested an iterative approach for solving (\ref{GenVFprob}) that used Levy's simplification as a first step.

We refer to this approach as \textsf{SK} \emph{iteration} and describe two equivalent formulations of it in \S 2.   One of these formulations leads to a particularly interesting refinement, introduced by Gustavsen and Semlyen \cite{Gustavsen-Semlyen-1999} under the name \emph{Vector Fitting} (\textsf{VF}).
We describe \textsf{VF} in \S 2 and make some observations that will contribute to our analysis of it in \S 3.
Overall, \textsf{VF} has been a great success, with more than 700 citations and a wide spectrum of applications.  Many authors have applied, modified, and analyzed \textsf{VF}, see e.g. \cite{Gustavsen-2006}, \cite{Hendrickx-Dhaene-2006},
\cite{Deschrijver-Haegeman-Dhaene-2007},
\cite{Deschrijver-Gustavsen-2007}, \cite{Deschrijver-Mrozowski-Dhaene-Zutter-2008}, \cite{Deschrijver-Knockaert-Dhaene-2010}.
Our motivation for studying \textsf{VF} came initially from a desire to articulate the relationship between \textsf{VF}
and optimal rational approximation, in particular, with $\Hardy_2$-optimal model order reduction.   We set the
stage for this in \S \ref{S=VF_and_H2} where we review some basic results related to  $\Hardy_2$-optimal rational approximation.
{We show that
a small \textsf{VF} fitting error
does not necessarily correspond to small approximation error  in the $\Hardy_2$ or $\Hardy_\infty$ norm.}
This observation motivates the developments of \S \ref{sec:vf_quad_h2}, \S \ref{SS=SamplingviaQuadrature},
where we show that particular choices of sampling points and weights, as dictated by suitable quadrature formulae, may significantly
improve the performance of \textsf{VF}.
The key innovation here lies in reformulating the problem essentially
as an approximation problem in a normed function space instead of as an algebraic LS problem.

Some implementation details are provided in \S \ref{sec:prac_issues}. Formal mathematical justification of
mirroring unstable nodes in \textsf{VF} is given in \S \ref{SSS::Mirroring-and-scaling}.
In \S \ref{S=Convergence}, we  use numerical examples to illustrate
the complexity  of the theoretically open problem of the convergence of \textsf{VF} iterations.
In \S \ref{SS=CaveatConditioning}, we discuss the important issue of high condition numbers of the matrices used
in VF, and introduce a regularized LS version of \textsf{VF}.
The behavior of \textsf{VF} in the case of noisy data is analyzed in \S \ref{SS=noisy},
where we show that \textsf{VF} will asymptotically and implicitly solve a structured total least squares problem in computing the
coefficients.   This goes some distance in explaining the robustness observed in \textsf{VF}.

In recent years, the Loewner framework, initially introduced by Mayo and Antoulas \cite{Mayo-2007} and further extended in \cite{AIL11,IonitaAntoulas2013,lefteriu2010new}, has emerged as a  powerful,  effective and numerically efficient method to construct rational approximants directly from frequency domain measurements.  Our major focus in this paper is the rational least-squares approximation produced by \textsf{VF}; to investigate \textsf{VF} from an optimal approximation perspective, to offer improvement based on this analysis and to examine several computational issues.  A comparison of \textsf{VF} with the Loewner framework and related approaches is natural to consider however it will not be considered here.

\section{The Sanathanan-Koerner  Iteration and Vector Fitting}
\subsection{SK iteration}
 Sanathanan and Koerner \cite{Sanathanan-Koerner-1963} noted
 that minimizing the objective function $\sum_{i=1}^\ell |n(\xi_i) - d(\xi_i)H(\xi_i)|^2$ instead of (\ref{WtdLSObjective})
 could produce quite different outcomes since
$|d(\xi_i)|$ could vary over a wide range of magnitudes.  They offered an alternative approach through the iterative adjustment of
the LS weights:
\begin{equation}\label{eq:relaxedNLS}
\begin{array}{c}
\mbox{Starting with  }d^{(0)}(s)\equiv 1,\mbox{ solve successively for }k=0,1,2,\ldots \\
\displaystyle \sum_{i=1}^\ell  \left| \frac{n^{(k+1)}(\xi_i) - d^{(k+1)}(\xi_i)H(\xi_i)}{d^{(k)}(\xi_i)} \right|^2 \longrightarrow \min.
\end{array}
\end{equation}
We will refer to this process as ``\textsf{SK} iteration."

\paragraph{Polynomial Representation} Using the polynomial representation of $H_r(s)$ in (\ref{LevyPolyForm}),
one may reformulate  \eqref{eq:relaxedNLS} as a weighted LS problem (following \cite{Sanathanan-Koerner-1963}):
\begin{equation} \label{eq:B=Vandermonde}
 \|\Delta^{(k)} ( \BV y^{(k+1)}-h )\|_2\rightarrow\min \
\end{equation}
where the optimization parameters are
$y^{(k+1)} = \left(\begin{smallmatrix} \alpha_0^{(k+1)} & \alpha_1^{(k+1)} & \cdots & \alpha_{r-1}^{(k+1)} & \beta_1^{(k+1)} & \beta_2^{(k+1)} & \cdots & \beta_r^{(k+1)}\end{smallmatrix}\right)^T,$
while
\begin{equation} \label{eq:Bmatrix}
\begin{array}{c}
\BV = \left(\begin{smallmatrix}
1      & \xi_1  & \xi_1^2 & \ldots & \xi_1^{r-1} & -H(\xi_1)\xi_1 & -H(\xi_1)\xi_1^2 & \ldots & -H(\xi_1) \xi_1^r \cr
1      & \xi_2  & \xi_2^2 & \ldots & \xi_2^{r-1} & -H(\xi_2)\xi_2 & -H(\xi_2)\xi_2^2 & \ldots & -H(\xi_2) \xi_2^r \cr
\vdots & \vdots & \vdots  & \vdots & \vdots      & \vdots         & \vdots           & \vdots & \vdots \cr
1 & \xi_{\ell-1} & \xi_{\ell-1}^2 & \ldots & \xi_{\ell-1}^{r-1} & -H(\xi_{\ell-1})\xi_{\ell-1} & -H(\xi_{\ell-1})\xi_{\ell-1}^2
& \ldots & -H(\xi_{\ell-1}) \xi_{\ell-1}^r \cr
1 & \xi_\ell & \xi_\ell^2 & \ldots & \xi_\ell^{r-1} & -H(\xi_\ell)\xi_\ell & -H(\xi_\ell)\xi_\ell^2 & \ldots & -H(\xi_\ell) \xi_\ell^r \cr
\end{smallmatrix}\right),\quad
h = \left(\begin{smallmatrix} H(\xi_1) \cr H(\xi_2) \cr \vdots \cr H(\xi_{\ell-1}) \cr  H(\xi_\ell)\end{smallmatrix}\right), \\[.3in]
\qquad \mbox{and }\qquad \Delta^{(k)} = \mathrm{diag}\left(\frac{1}{|d^{(k)}(\xi_j)|}\right)_{j=1}^\ell.
\end{array}
\end{equation}

The sequence of LS solutions, $y^{(k)}$, yields polynomial coefficients for the sequence of numerators, $n^{(k)}(s)$, and denominators, $d^{(k)}(s)$, of $H_r^{(k)}(s)$ (as in (\ref{LevyPolyForm})).  If the denominator sequence, $d^{(k)}(s)$, converges,  then so does the numerator sequence, $n^{(k)}(s)$, and so the \textsf{SK} iteration (\ref{eq:relaxedNLS}) produces a system $H_r(s)$ that may be expected to be a locally optimal solution to (\ref{GenVFprob}).

\paragraph{Barycentric representation} The rational function $H_r(s)$ in  (\ref{LevyPolyForm}) can be
represented alternatively in barycentric form, which happens here to be both elegant and useful.
We develop this by expressing the numerator and the denominator in a Lagrange interpolating basis:
Pick an arbitrary set of mutually distinct scalars $\lambda_0, \lambda_1,\ldots, \lambda_r$ (``interpolation points")
and define the nodal polynomial
$\omega_r(s)=\prod_{k=1}^r (s-\lambda_k)$ (notice $\lambda_0$ is excluded).
Then,
$$
n(s) =\omega_r(s) \sum_{j=1}^r  \frac{w_j\, n(\lambda_j) }{s-\lambda_j}
\quad\mbox{ and }\quad
d(s) = \omega_r(s)\left(\alpha+ \sum_{j=1}^r  \frac{w_j\, d(\lambda_j) }{s-\lambda_j}\right) ,
$$
where $w_j = 1 / \prod_{k\neq j}(\lambda_j-\lambda_k)$ enforces interpolation of $n(s)$, and hence $H_r(s)$, at $s=\lambda_j$ for $j=1,\ldots,r$ and choosing $\alpha=\frac{d(\lambda_0)}{\omega_r(\lambda_0)}-\sum_{j=1}^r  \frac{d(\lambda_j) w_j}{\lambda_0-\lambda_j}$
then enforces interpolation of $H_r$  also at $s=\lambda_0$.  As long as $d(s)$ has polynomial degree $r$, then $\alpha\neq 0$.
Define $\phi_j=\frac{w_j}{\alpha}n(\lambda_j)$ and $\varphi_j=\frac{ w_j}{\alpha}d(\lambda_j)$, so
\begin{equation}\label{eq::barycentric-form-def}
H_r(s) = \frac{{\sum_{j=1}^r \frac{\phi_j}{s - \pol_j}}}
{{1+ \sum_{j=1}^r \frac{\varphi_j}{s - \pol_j}}} = \frac{\tilde{n}(s)}{\tilde{d}(s)}
\qquad \mbox{with }\left\{ \begin{array}{l} \tilde{n}(s)= \sum_{j=1}^r \frac{\phi_j}{s - \pol_j},\mbox{ and}\\
\tilde{d}(s) =1+ \sum_{j=1}^r \frac{\varphi_j}{s - \pol_j} \; .\end{array}\right.
\end{equation}
We may now use $\phi_j, \varphi_j$  as optimization parameters in each step of the \textsf{SK} iteration (\ref{eq:relaxedNLS}).
Indeed,  for a given set of interpolation points,  $\lambda_1,\ldots, \lambda_r$, observation points
 $ \xi_1,\ldots, \xi_{\ell}$,
and system observations $ H(\xi_1),\ldots, H(\xi_{\ell})$, the  parameters $\phi_j^{(k)}, \varphi_j^{(k)}$ describe
$H_r^{(k)}(s)=\frac{\tilde{n}^{(k)}(s)}{\tilde{d}^{(k)}(s)}$ in the $k$th step of (\ref{eq:relaxedNLS}), replacing
$n^{(k)}$ and $d^{(k)}$ in (\ref{eq:relaxedNLS}) with
\begin{equation}\label{new_n_d}
\tilde{n}^{(k)}(s)= \sum_{j=1}^r \frac{\phi_j^{(k)}}{s - \pol_j}
\quad \mbox{ and }\quad
\tilde{d}^{(k)}(s) = 1+ \sum_{j=1}^r \frac{\varphi_j^{(k)}}{s - \pol_j},
\end{equation}
respectively.
Now, $\phi_j^{(k)}, \varphi_j^{(k)}$ are determined by  solution of the  successive least squares problems
\begin{equation}\label{eq:LS}
\|\Delta^{(k)} ( \AC x^{(k+1)}-h )\|_2\rightarrow\min,\;\; k = 0, 1, 2, \ldots,
\end{equation}
where the unknowns now are
$x^{(k+1)} = \left(\begin{smallmatrix} \phi_1^{(k+1)} & \phi_2^{(k+1)} & \cdots & \phi_r^{(k+1)} & \varphi_1^{(k+1)} & \varphi_2^{(k+1)} & \cdots & \varphi_r^{(k+1)}\end{smallmatrix}\right)^T $ and
\begin{equation} \label{eq:A=PC}
\AC = \left(\begin{smallmatrix}
\frac{1}{\xi_1-\pol_1} & \frac{1}{\xi_1-\pol_2} & \cdots & \frac{1}{\xi_1-\pol_r} & \frac{-H(\xi_1)}{\xi_1-\pol_1} &
\frac{-H(\xi_1)}{\xi_1-\pol_2} & \cdots & \frac{-H(\xi_1)}{\xi_1-\pol_r} \\[0.3em]
\frac{1}{\xi_2-\pol_1} & \frac{1}{\xi_2-\pol_2} & \cdots & \frac{1}{\xi_2-\pol_r} & \frac{-H(\xi_2)}{\xi_2-\pol_1} &
\frac{-H(\xi_2)}{\xi_2-\pol_2} & \cdots & \frac{-H(\xi_2)}{\xi_2-\pol_r} \cr
\vdots & \vdots & \vdots & \vdots & \vdots & \vdots & \vdots & \vdots \cr
\frac{1}{\xi_{\ell-1}-\pol_1} & \frac{1}{\xi_{\ell-1}-\pol_2} & \cdots & \frac{1}{\xi_{\ell-1}-\pol_r} & \frac{-H(\xi_{\ell-1})}{\xi_{\ell-1}-\pol_1} &
\frac{-H(\xi_{\ell-1})}{\xi_{\ell-1}-\pol_2} & \cdots & \frac{-H(\xi_{\ell-1})}{\xi_{\ell-1}-\pol_r}\cr
\frac{1}{\xi_\ell-\pol_1} & \frac{1}{\xi_\ell-\pol_2} & \cdots & \frac{1}{\xi_\ell-\pol_r} & \frac{-H(\xi_\ell)}{\xi_\ell-\pol_1} &
\frac{-H(\xi_\ell)}{\xi_\ell-\pol_2} & \cdots & \frac{-H(\xi_\ell)}{\xi_\ell-\pol_r}
\end{smallmatrix}\right).
\end{equation}
Note $h$ and $\Delta^{(k)}$ are as defined in (\ref{eq:Bmatrix}), with $\tilde{d}^{(k)}(s)$ as
given in (\ref{new_n_d})  replacing  $d^{(k)}(s)$ in $\Delta^{(k)}$.

\paragraph{Equivalence of the representations}
It is straightforward to see that both (\ref{eq:LS})-(\ref{eq:A=PC}) and (\ref{eq:B=Vandermonde})-(\ref{eq:Bmatrix}) are
simply different representations of the same iteration step described in (\ref{eq:relaxedNLS}),
the key difference being that $H_r(s)$ is expressed with respect to different bases.
Note that $\BV$ in (\ref{eq:B=Vandermonde})-(\ref{eq:Bmatrix}) depends solely on the
complex frequency points,  $\xi_i$, at which the system is observed, while
$\AC$ in (\ref{eq:LS})-(\ref{eq:A=PC}) depends both on  those observation points, $\{\xi_i\}$ and on auxiliary interpolation points,
$\{\lambda_j\}$.  The interpolation points ($\lambda$s) used in the definition of $\AC$ have been chosen arbitrarily; they serve
just to fix a particular barycentric representation, and remain constant throughout the iteration.

Interestingly, if the interpolation points used in the definition of $\AC$ are chosen to be the $r$th roots of unity,
$\lambda_j=\overline{\omega}^{j-1}$ with $\omega = \broje^{\imunit(2\pi/r)}$, then one can show that the \textsf{SK} iterations
in \eqref{eq:B=Vandermonde} and \eqref{eq:LS}
are related via the $r$-dimensional discrete Fourier Transform, $\FFT\in\IC^{r\times r}$ with $\FFT_{ij}=\frac{\omega^{(i-1)(j-1)}}{\sqrt{r}}$.
More precisely,
solving $\|\Delta^{(k)} ( \BV y^{(k+1)}-h )\|_2\rightarrow\min$  in the usual polynomial basis is equivalent to solving
$\| \tilde{\Delta}^{(k)} ( \AC \tilde{x}^{(k+1)}  -  D_1^{-1}h )\|_2 \rightarrow\min$ with a particular choice of barycentric representation,
and the two solutions are related by
\begin{equation}
y^{(k+1)}=\FFT D_2 \tilde{x}^{(k+1)},\;\;\mbox{where}\;\; (D_1)_{ii} =\frac{\xi_i^n-1}{\sqrt{r}},~~
(D_2)_{jj} = \frac{1}{\omega^{j-1}}.
\end{equation}

Each of the iterative processes described in (\ref{eq:B=Vandermonde})-(\ref{eq:Bmatrix})
and in (\ref{eq:LS})-(\ref{eq:A=PC}) are concrete realizations of (\ref{eq:relaxedNLS}), and as such,
they each are driven by successive updates of the weighting factors  $\Delta^{(k)}$.
As the weighting factors, $\Delta^{(k)}$, change, so too do the denominators of
the approximants $H_r^{(k)}(s) = \frac{n^{(k)}(s)}{d^{(k)}(s)}$ and, in particular, the
poles of  $H_r^{(k)}(s)$ will change from step to step.
No constraint has been imposed that guarantees these poles remain in the left half-plane, and so it may happen that a minimizing solution to (\ref{eq:relaxedNLS}) produces an unstable system, an outcome that would generally be viewed as unsatisfactory.
Thus, as a practical matter, it is necessary additionally to monitor the zeros of the denominators, $d^{(k)}(s)$, and, perhaps on occasion,
intercede to repair unstable poles as they emerge (e.g., by reflecting them across the imaginary axis back into the left half-plane).
\emph{Vector Fitting}, as we see next, also uses this information to determine an advantageous representation for the next step in (\ref{eq:relaxedNLS}).

\subsection{Vector Fitting (\textsf{VF}) \cite{Gustavsen-Semlyen-1999}}  \label{subsec:VF}
Since the choice of the interpolation points in the \textsf{SK} iteration only determines a particular
barycentric representation for rational functions, one is free to change the $\lambda_j$ at every step.
The original formulation of \emph{Vector Fitting} as introduced by Gustavsen and Semlyen \cite{Gustavsen-Semlyen-1999}
takes advantage of this flexibility
and cleverly updates the interpolation points in the course of the iteration. In addition to
{providing more accurate rational approximants} and generally providing greater stability and better performance than
 the \textsf{SK} iteration,
 this dynamic updating of the interpolation points achieves other  useful goals, as explained below and in \S\ref{SS=noisy}.

Suppose now that the interpolation points depend on $k$ and denote them by $\lambda_j^{(k)}$;
we define $\AC^{(k)}\equiv \AC(\bflambda^{(k)})$ to be $\AC$  as defined in (\ref{eq:A=PC}), but with $\lambda_j$ replaced by $\lambda_j^{(k)}$.
After the $k$-th step of the iteration,
\textsf{VF} assigns $\lambda_j^{(k+1)}$ to be the zeros
of $\tilde{d}^{(k)}(s)$ in (\ref{new_n_d}):
\begin{equation} \label{PoleZero_d_k}
\tilde{d}^{(k)}(s) = 1+  \sum_{j=1}^r \frac{\varphi_j^{(k)}}{s-\lambda_j^{(k)}}
= \frac{\prod_{j=1}^r (s-\lambda_j^{(k+1)})}{\prod_{j=1}^r (s-\lambda_j^{(k)})}.
\end{equation}
 Then, the goal of \eqref{eq:LS} becomes the minimization of
\begin{align}
 \|\Delta^{(k)}(\AC^{(k)} x^{(k+1)} - h )\|_2^2 &= \sum_{i=1}^\ell \frac{1}{|\tilde{d}^{(k)}(\xi_i)|^2} \left| \sum_{j=1}^r \frac{\phi_j^{(k+1)}}{\xi_i-\lambda_j^{(k)}}
- H(\xi_i) \left(1+\sum_{j=1}^r \frac{\varphi_j^{(k+1)}}{\xi_i-\lambda_j^{(k)}}\right) \right|^2  \\
 &=\sum_{i=1}^\ell \left| \frac{\prod_{j=1}^r (\xi_i-\lambda_j^{(k)})}{\prod_{j=1}^r (\xi_i-\lambda_j^{(k+1)})}\right|^2
\left| \frac{\tilde{p}^{(k+1)}(\xi_i)}{\prod_{j=1}^r (\xi_i-\lambda_j^{(k)})} - H(\xi_i) \frac{\tilde{q}^{(k+1)}(\xi_i)}{\prod_{j=1}^r (\xi_i-\lambda_j^{(k)})}\right|^2  \nonumber
\end{align}
where $\tilde{p}^{(k+1)}$ and $\tilde{q}^{(k+1)}$ are, respectively, polynomials of degree $r-1$ and $r$.
 Continuing with similar algebraic manipulations, one obtains
\begin{eqnarray}
 \|\Delta^{(k)}(\AC^{(k)} x^{(k+1)} - h )\|_2^2& = & \sum_{i=1}^\ell
\left| \frac{\tilde{p}^{(k+1)}(\xi_i)}{\prod_{j=1}^r (\xi_i-\lambda_j^{(k+1)})} - H(\xi_i) \frac{\tilde{q}^{(k+1)}(\xi_i)}{\prod_{j=1}^r (\xi_i-\lambda_j^{(k+1)})}\right|^2  \nonumber\\
&=& \sum_{i=1}^\ell \left| \sum_{j=1}^r \frac{\tilde\phi_j^{(k+1)}}{\xi_i-\lambda_j^{(k+1)}}
- H(\xi_i) \left( 1 + \sum_{j=1}^r \frac{\tilde\varphi_j^{(k+1)}}{\xi_i-\lambda_j^{(k+1)}} \right) \right|^2 \label{tildephi} \\
&=& \|\AC^{(k+1)} \tilde{x}^{(k+1)} - h \|_2^2 , \nonumber
\label{eq:VF_unweighted}
\end{eqnarray}
where $\tilde{x}^{(k+1)} = \left( \tilde{\phi}_1^{(k+1)} ~ \tilde{\phi}_2^{(k+1)} ~ \cdots ~ \tilde{\phi}_r^{(k+1)} ~ \tilde{\varphi}_1^{(k+1)} ~ \tilde{\varphi}_2^{(k+1)} ~ \cdots ~ \tilde{\varphi}_r^{(k+1)}\right)^T$
with $\tilde{\phi}_j^{(k+1)}$ and   $\tilde{\varphi}_j^{(k+1)}$ as defined in \eqref{tildephi}.
Thus,
one step of \textsf{VF} corresponds  to solving the least squares problem
\begin{equation}\label{eq:VF}
\|  \AC^{(k+1)} \tilde{x}^{(k+1)}-h \|_2\rightarrow\min,\;\; k = 0, 1, 2, \ldots
\end{equation}
This is an unweighted LS step using an updated barycentric
representation of $H_r(s)$ based on $\bflambda^{(k+1)}$ and with the coefficient matrix $\AC^{(k+1)}=\AC(\bflambda^{(k+1)})$;
effectively, one step of the \textsf{SK} iteration with unity weighting. For this reason,  \textsf{VF} may be thought of as
a representation of \textsf{SK} iteration in a well-chosen basis \cite{Hendrickx-Dhaene-2006}.
One of the points we make in this paper is that \textsf{VF}
is more than that.

The scaling that underlies the \textsf{SK} iteration is implicit in  (\ref{eq:VF_unweighted}) and
provides a critical correction to the approximation metric when
close to the true minimizer.  However, when the approximant, $H_r^{(k)}$ is  far from the true minimizer,
that same scaling  may inflict severe damage on the early evolution of the iterations,
leading subsequent  iterates to an unsatisfactory final approximant (cf. \cite{Sanathanan-Koerner-1963}).
This makes the performance of the \textsf{SK} iteration (and hence also the \textsf{VF} iteration) potentially sensitive to the quality of initialization.

Since \textsf{VF} assigns $\lambda_j^{(k+1)}$ to be the zeros
of $\tilde{d}^{(k)}(s)$, the poles of $\tilde{d}^{(k+1)}(s)$ will be zeros of $\tilde{d}^{(k)}(s)$ and in the limit,
assuming convergence, pole-zero cancelation occurs.  If the interpolation points, $\lambda_j^{(k)}$, converge to
finite values as $k\rightarrow \infty$ then  from (\ref{PoleZero_d_k}),
$\tilde{d}^{(k)}(s) \rightarrow 1$ and, in the limit, $\tilde{n}^{(k)}(s)$ will give the final rational approximant in the pole-residue
representation. However, theoretical convergence of  \textsf{VF} is still an open problem, and
a careful justification
of the stopping criterion (e.g. using backward error analysis) is also lacking.
We address these issues in more detail in \S \ref{S=Convergence}.

\section{Vector Fitting and Discrete $\Hardy_2$ Approximation}\label{S=VF_and_H2}

\subsection{$\Hardy_2$ approximation}  \label{sec:h2approx}
Let $\mathcal{H}_2(\Cplx_+)$ denote the vector space  of complex functions, $H(s)$,
that are analytic in the open right-half plane,
$\Cplx_+ = \{s\equiv x+\imunit y \in \Cplx\; :\; x >0\}$,
such that
$
\sup_{x>0}\int_{-\infty}^{+\infty}|H(x+\imunit y)|^2 dy < \infty .
$
$\mathcal{H}_2(\Cplx_+)$ is a Hilbert space endowed  with an inner product
\begin{equation} \label{eq:h2inner}
\left< G, H\right>_{\Hardy_2} = \frac{1}{2\pi}\int_{-\infty}^{+\infty}  \overline{H(\imunit\omega)}\, G(\imunit\omega)\, d\omega,\;\;
\mbox{and norm}\;\; \|G\|_{\Hardy_2}=\sqrt{\left<G,G\right>_{\Hardy_2}}\,.
\end{equation}
The boundary operator isometry $\mathcal{T} : \mathcal{H}_2(\Cplx_+) \longrightarrow L_2(\imunit\R)$,
$\mathcal{T}[H](\imunit\omega)=\lim_{x\downarrow 0} H(x+\imunit\omega)$, identifies $H$ with its boundary
function, $\mathcal{H}_2(\Cplx_+)\cong \mathrm{Range}(\mathcal{T}) \subset L_2(\imunit\R)$.
%%%%%%
%%%%%%%%
If $G$ and $H$  are strictly proper rational functions representing transfer functions of real stable linear time invariant
dynamical systems then $G,H\in \mathcal{H}_2(\Cplx_+)$, and we have in addition,
$$
\left< G, H\right>_{\Hardy_2} = \left< H, G\right>_{\Hardy_2}=
 \frac{1}{2\pi}\int_{-\infty}^{+\infty} H(-\imunit\omega)\, G(\imunit\omega)\, d\omega \quad
 \mbox{and} \quad
G(s) = \frac{1}{2\pi}\int_\R \frac{G(\imunit\omega)}{s-\imunit\omega} d\omega.
$$
If $H_r$ is an $\Hardy_2$-optimal $r$th order rational approximation to a given $H(s)\in \Hardy_2$, then it must be a Hermite interpolant of $H(s)$ in the following sense:  Suppose
$$
H_r(s) = \sum_{i=1}^r \frac{\phi_i}{s-\lambda_i} = \mathrm{arg}\hspace{-0.8em}\min_{\hspace{-1em}\mbox{\tiny order }\widetilde{H}_r\leq r \atop \hspace{-0.5em}\widetilde{H}_r\, \mbox{\tiny  stable} }\| H - \widetilde{H}_r\|_{\Hardy_2}.
$$
Then,
\begin{equation} \label{eq:h2optcond}
H(-\lambda_j) = H_r(-\lambda_j)\qquad{\rm and}\qquad H'(-\lambda_j) = H_r'(-\lambda_j),\qquad{\rm for}~~~j=1,2,\ldots,r;
\end{equation}
$H_r(s)$ is a Hermite interpolant to $H(s)$ at the mirror images of its own poles reflected across the imaginary axis \cite{MeierLuenberger:1967:Approximation,Gugercin_Antoulas_Beattie:2008:IRKA}.
 These optimal interpolation points, $\{-\lambda_i\}_{i=1}^r,$ evidently depend on the poles of the optimal approximant that is sought, so they are not known \emph{a priori}.  The \emph{Iterative Rational Krylov Algorithm} (\textsf{IRKA}) of Gugercin et al. \cite{Gugercin_Antoulas_Beattie:2008:IRKA} is a numerically effective iterative correction process that systematically enforces these necessary conditions for optimality.

The original formulation of \textsf{IRKA} described in  \cite{Gugercin_Antoulas_Beattie:2008:IRKA} requires access to a first-order state-space realization for $H(s)$:
$H(s) = \bfC (s \,\bfE - \mathbf{F})^{-1}\bfB$.   By employing a Loewner-matrix framework introduced by Mayo and Antoulas
\cite{Mayo-2007}, Beattie and Gugercin \cite{Beattie_Gugercin::RealizationIndependent}  relaxed this requirement; one only needs the ability to evaluate $H(s)$ for $s \in \IC$ in order to obtain (locally) $\Hardy_2$-optimal rational approximants to $H(s)$.
This has allowed effective data-driven $\Hardy_2$-optimal system approximation for a much larger class of functions, including many that are not necessarily rational such as arise with delay systems.
 For more details on optimal $\Hardy_2$ approximation, see \cite{Gugercin_Antoulas_Beattie:2008:IRKA,Ant2010imr,MeierLuenberger:1967:Approximation,wilson1970optimum,spanos1992anewalgorithm} and references therein.

 Notably, the data required to run the  Loewner-\textsf{IRKA} approach of \cite{Beattie_Gugercin::RealizationIndependent} is similar to what is required for \textsf{VF} but with one important difference,  neither the number nor the location of the points of evaluation of $H(s)$ is known in advance for  the Loewner-\textsf{IRKA} approach.  This is in contrast to \textsf{VF} where a predetermined number of $H(s)$ evaluations are computed (or provided by simulation) at the beginning and the rest of the process does not require any new $H(s)$ evaluations. This, of course, comes with the disadvantage that the resulting approximation due to
 \textsf{VF} will fit only the sampling of $H(s)$ that had been acquired and so it ultimately may be a poor approximation to $H(s)$ with respect to an $\mathcal{H}_2$ or $\Hardy_\infty$ measure.

\subsection{Reformulating Vector Fitting as Discrete $\mathcal{H}_2$ minimization} \label{sec:vf_quad_h2}
\textsf{VF} is widely recognized as a very effective tool in creating rational approximants that fit frequency-sampled functions.  How best to organize the necessary frequency sampling is not discussed in general and seems governed more by expedience with just a few general guidelines.  For example, in the discussion portion of \cite{Gustavsen-Semlyen-1999}, the authors offer the heuristic "The samples should be chosen so densely that the frequency response is fully resolved. "  They go on to recommend having at least as many samples as poles ($r$) and, in turn, at least twice as many poles as there are peaks in the frequency response.  These are useful guidelines, yet clearly they do not (nor are they intended to) cover all cases of interest: for example, high modal densities can obscure resonances.  Moreover, if significant expense is associated with obtaining each frequency sample, then one is motivated to reduce sampling density and one may be forced to enter the gray area between a sampling density that ``fully resolves" the frequency response and one that may leave it ``unresolved."   Indeed, certain application settings may not allow sufficient sampling density to resolve fully the frequency response and one wishes then to maximize the effectiveness of parsimonious sampling strategies.
 \begin{example}
 {\em
Consider the FOM Model from the \textsf{NICONET} Benchmark collection \cite{NICONET-report}. The model $H(s)$ has order
 $n=1006$, yet the frequency response has only three obvious peaks, between 8 Hz and 160 Hz.
 We create a rational approximant of order $r=12$ using \textsf{VF} with $\ell=40$ frequency sampling points $f_i$, logarithmically spaced between $10^{-3}$ and $10^3$.
  \textsf{VF} was very effective in producing a
rational approximant with an excellent goodness-of-fit; the relative least-squares residual
was $ 2.75 \times 10^{-4}$. However, this did not mean a high-fidelity model was obtained:  indeed, the corresponding relative $\Hardy_2$ error was  only $1.78 \times 10^{-1}$ and much better models of the same order can be obtained easily.
Applying \textsf{IRKA} to the same system produced a model of the same order, but with a relative $\Hardy_2$ error of only $1.92 \times 10^{-4}$, an approximation that is virtually indistinguishable from the original.   Not surprisingly, this greater accuracy came at a somewhat greater cost:
On this example, \textsf{IRKA} took $5$ iterations to converge. Every iteration step required twelve $H(s)$ evaluations and twelve $H'(s)$ evaluations. However, the twelve interpolation points comprised $3$ complex conjugate pairs and $6$ real points in each iteration, so every iteration required only  nine independent $H(s)$ and nine independent $H'(s)$ evaluations, netting a total of  $\ell = 45$ $H(s)$ and $\ell = 45$ $H'(s)$ evaluations.  The main point to note in this regard is not so much the number of function/derivative evaluations --- it is often the case that function and derivative evaluations can be combined so the net computational effort, both in this case and in general, is typically far less than twice what is required just for function evaluations. Rather, one should note that with \textsf{IRKA} (and in contrast with \textsf{VF}), one cannot anticipate exactly \emph{where} these function evaluations will occur beforehand.
}
 \end{example}

Our goal is to bring the achievable accuracy of \textsf{VF} more in line with what \textsf{IRKA} can provide, without sacrificing its attractive computational features.   We find that by interpreting the \textsf{VF} objective function of (\ref{GenVFprob}) as a discretization of an $\Hardy_2$ error measure, remarkably effective sampling strategies may be developed systematically through numerical quadrature.   The general approach that we will take in the sequel arrives at a vector fitting formulation (\ref{GenVFprob})  by approximating the $\Hardy_2$ error with an appropriate quadrature rule.
This will lead us to minor modifications of \textsf{VF} that we find often dramatically improves its quality of approximation.

\subsection{Effective Sampling Points via Quadrature}\label{SS=SamplingviaQuadrature}

Approximating the $\Hardy_2$ error measure with
  a quadrature rule leads one to consider
 approximations of the form
{\small
 \begin{eqnarray}\label{eq:H2error_discretized}
\int_{-\infty}^{+\infty}|H(\imunit\omega)-H_r(\imunit\omega)|^2 d\omega &\approx&
\sum_{j=1}^\ell \rho_j^2  |H( \xi_j) - H_r( \xi_j)|^2 + \rho_{+}^2\,M_{+}[|H-H_r|^2] +\rho_{-}^2\,M_{-}[|H-H_r|^2]
\end{eqnarray}
}
 where $M_{\pm}[G]$ are linear functionals of $G$ that capture information about behavior at $\pm\infty$.
Note that if $\rho_{+}=\rho_{-}=0$, with all other $\rho_j=1$,
and if sampling nodes, $\xi_j$,  are chosen to be equidistant and in complex conjugate pairs, then we
recover the usual \textsf{VF} objective function which then can be understood
as a composite trapezoid quadrature rule for the integral in (\ref{eq:H2error_discretized}),
giving the $\Hardy_2$ error.

Of course, the trapezoid rule will not be an optimal choice of quadrature rule in most cases and
many, much more effective options are easily formulated, many of which involve first mapping
the unbounded domain of integration, $(-\infty,\infty)$, to a finite interval, often either $(-1,1)$ or $(0,\pi)$,
and then applying a high accuracy quadrature rule.   We  focus on a quadrature rule developed by Boyd \cite{Boyd-1987}, which is related to Clenshaw-Curtis quadrature and chosen here for its simplicity.  Many options of this sort may be considered;  our main goal is to illustrate the potential of this approach without overburdening the reader with technicalities.

Adapted to our setting, the Boyd/Clenshaw-Curtis (\textsf{B/CC}) formula \cite{Boyd-1987} is
\begin{align}\label{eq:Boyd:formula}
\|H(s)\|_{\Hardy_2}^2=\int_{-\infty}^{+\infty}|H(\imunit\omega)|^2 \,d\omega
&  =  \int_0^\pi \frac{L}{\sin^2 t} |H(\imunit L \cot t)|^2 \, dt \nonumber \\
 \approx \sum_{j=1}^\ell  \frac{L\pi}{(\ell+1)\sin^2 t_j} & |H(\imunit L \cot t_j)|^2 +
\frac{\pi}{2L(\ell+1)}\left(|M_{+}[H]|^2+|M_{-}[H]|^2\right).
\end{align}
where
$L>0$ is  a freely chosen scaling parameter, $t_j = \frac{j\pi}{\ell+1}, $ for $j=1,\ldots, \ell$,  and
\begin{equation} \label{first_last_terms}
\begin{array}{c}
\displaystyle M_{+}[G] = \lim_{\omega\rightarrow \infty} \imunit\omega\, G(\imunit \omega) =\lim_{t\rightarrow 0^{+}}\frac{G(\imunit L \cot t)}{\sin(t)}\cdot \,\imunit L \\[.2in]
\displaystyle M_{-}[G] = \lim_{\omega\rightarrow -\infty} \imunit\omega\, G(\imunit \omega) =\lim_{t\rightarrow \pi^{-}}\frac{G(\imunit L \cot t)}{\sin(t)}\cdot \, \imunit L
\end{array}
\end{equation}
For example, if  $H(s)$ is a strictly proper transfer function with realization,  $H(s) = \bfC(sI-\mathbf{F})^{-1}\bfB$, then
$M_{+}[H]=M_{-}[H]=\bfC\bfB$.

The choice of $L$ can  influence greatly the accuracy of this quadrature rule.  Notice that as the value of $L$ decreases, the quadrature nodes are drawn towards the origin with diminished weight, while contributions at $\pm\infty$ have increased weight to compensate.    Boyd \cite{boyd1982optimization} observed that when integrands are entire functions,  accuracy may be increased optimally by increasing $L$ in a way that is dependent on the order of the quadrature rule ($\ell$) and the growth of the integrand at $\infty$.
However, if the integrand is meromorphic,  increasing $L$ will also draw singularities toward the sampling domain, and accuracy will eventually degrade.   Choosing $L$ optimally to balance these two effects is nontrivial, and Boyd \cite{boyd1982optimization} offers concrete strategies and an insightful discussion.
To illustrate the effect of different choices for $L$, we used
(\ref{eq:Boyd:formula}) to compute the ${\mathcal{H}_2}$ norm of the Heat Model from the \textsf{NICONET} Benchmark collection \cite{NICONET-report}.  With only $20$ function evaluations and using $L=0.486$, we approximated  $\|H\|_{\mathcal{H}_2}$ with a relative error of $2.8\cdot 10^{-7}$.  Even using only $10$ function evaluations (while keeping the same $L$ value) resulted in a relative error of $2.2\cdot 10^{-4}$.    When one considers that the usual computational task involved in computing the ${\mathcal{H}_2}$ norm involves the solution of a (large) Lyapunov equation, the ability to compute the ${\mathcal{H}_2}$ norm to such great accuracy with only $10$ function evaluations suggests the power that effective numerical quadrature can bring.   Note that in this example, the function behaves quite well.  If the function has many nearly unstable poles, then determining an optimal $L$ will not be as simple.   To provide some contrast, if we decrease $L$ to $L=0.1$ then with $20$ function evaluations, the ${\mathcal{H}_2}$  norm is estimated with a worse relative error of $2.7\cdot 10^{-4}$. Likewise, if we increase $L$ to $L=1$ then we also obtain a degraded relative error of $7.8\cdot 10^{-4}$. The price of a poor choice of $L$ may be a significant increase in quadrature order so as to compensate for the loss of accuracy: If we choose an even smaller $L$ value such as $L=0.01$, $60$ function evaluations will give a relative error of  $4.9\cdot 10^{-3}$, and increasing the number of function evaluations to $90$ recovers an accuracy of $8.4 \cdot 10^{-4}$.
We do not discuss the interesting and important question of how best to choose $L$ further here,
since we have introduced this quadrature rule here only to illustrate our approach.

We now adapt the \textsf{B/CC}
quadrature rule in order to modify the objective function for \textsf{VF}.
In the $k$th step, the $r$th order rational approximant is defined as before:
$ H_r^{(k)}(s)=\frac{ \sum_{j=1}^r \frac{\phi_j^{(k)}}{s - \pol_j^{(k)}} }{1+ \sum_{j=1}^r \frac{\varphi_j^{(k)}}{s - \pol_j^{(k)}} }.$
The poles $\bflambda^{(k+1)}$ are determined from the $r$ roots of
$1+ \sum_{j=1}^r \frac{\varphi_j^{(k)}}{s - \pol_j^{(k)}}=0$.  Now, $\phi_j^{(k)}$ and  $\varphi_j^{(k)}$, will be determined from  the solution of the successive
weighted least squares problems
\begin{equation}
\|\Delta \left( \AC(\bflambda^{(k+1)}) x^{(k+1)}-h \right)\|_2\rightarrow\min,\;\; k = 0, 1, 2, \ldots,
\end{equation}
where
$x^{(k+1)} = \left(\begin{smallmatrix} \phi_1^{(k+1)} & \phi_2^{(k+1)} & \cdots & \phi_r^{(k+1)} & \varphi_1^{(k+1)} & \varphi_2^{(k+1)} & \cdots & \varphi_r^{(k+1)}\end{smallmatrix}\right)^T$,
\begin{multline}
\AC(\bflambda) = \left(\begin{smallmatrix}
\frac{1}{\xi_1-\pol_1} & \frac{1}{\xi_1-\pol_2} & \cdots & \frac{1}{\xi_1-\pol_r} & \frac{-H(\xi_1)}{\xi_1-\pol_1} &
\frac{-H(\xi_1)}{\xi_1-\pol_2} & \cdots & \frac{-H(\xi_1)}{\xi_1-\pol_r} \\[0.3em]
\frac{1}{\xi_2-\pol_1} & \frac{1}{\xi_2-\pol_2} & \cdots & \frac{1}{\xi_2-\pol_r} & \frac{-H(\xi_2)}{\xi_2-\pol_1} &
\frac{-H(\xi_2)}{\xi_2-\pol_2} & \cdots & \frac{-H(\xi_2)}{\xi_2-\pol_r} \\[0.3em]
\vdots & \vdots & \vdots & \vdots & \vdots & \vdots & \vdots & \vdots \\[0.3em]
\frac{1}{\xi_{\ell-1}-\pol_1} & \frac{1}{\xi_{\ell-1}-\pol_2} & \cdots & \frac{1}{\xi_{\ell-1}-\pol_r} & \frac{-H(\xi_{\ell-1})}{\xi_{\ell-1}-\pol_1} &
\frac{-H(\xi_{\ell-1})}{\xi_{\ell-1}-\pol_2} & \cdots & \frac{-H(\xi_{\ell-1})}{\xi_{\ell-1}-\pol_r}\\[0.3em]
\frac{1}{\xi_\ell-\pol_1} & \frac{1}{\xi_\ell-\pol_2} & \cdots & \frac{1}{\xi_\ell-\pol_r} & \frac{-H(\xi_\ell)}{\xi_\ell-\pol_1} &
\frac{-H(\xi_\ell)}{\xi_\ell-\pol_2} & \cdots & \frac{-H(\xi_\ell)}{\xi_\ell-\pol_r}\\[0.3em]
1 & 1 &  \ldots &  1 & 0 & 0 & \ldots & 0
\end{smallmatrix}\right), \quad
h = \left(\begin{smallmatrix} H(\xi_1) \\[0.4em] H(\xi_2) \\[0.4em] \vdots \\[0.4em] H(\xi_{\ell-1}) \\[0.4em]  H(\xi_\ell) \\[0.4em] M_{+}[H] \end{smallmatrix}\right), \\[1em]
 \mbox{and } \Delta = \mathrm{diag}\left( {\rho_1},\, {\rho_2},\, \ldots,\,{\rho_{\ell}},\,{\rho_{+}}\right)  \mbox{ with nodes }\xi_j=\imunit L \cot\left(\frac{j\pi}{\ell+1}\right) \\
\mbox{ and weights }\rho_j=\csc\left(\frac{j\pi}{\ell+1}\right)\,\sqrt{\frac{L\pi}{(\ell+1)} }\;\mbox{ for }j=1,\ldots, \ell
\mbox{ and }\rho_{+}=\sqrt{\frac{\pi}{L(\ell+1)}}
\end{multline}
determined by the quadrature rule (\ref{eq:Boyd:formula}).  This describes the main iteration of our quadrature-based
variant of \textsf{VF}. We will refer to this variant as \textsf{QuadVF}.   The term $M_{+}$ from (\ref{first_last_terms}) is retained and given double weight, since $M_{+}[H]=M_{-}[H]$ for real systems.
Notice that the weighting matrix $\Delta$ is fixed with respect to $k$
and that the quadrature nodes are closed under conjugation: $\xi_j=\overline{\xi_{\ell+1-j}}$, halving the number of function evaluations needed to implement the formula.  This symmetry is also reflected in the weights:
$\rho_j=\rho_{\ell+1-j}$.

\subsection{Numerical Comparisons}\label{SS=NumericalComparison}
\subsubsection{Heat Model: \textsf{VF} vs. \textsf{QuadVF}}
We use the aforementioned  Heat Model for this example.
We  take $\ell=20$ samples  (requiring only $10$ function evaluations due to the complex conjugate sampling points) and apply both \textsf{VF} and \textsf{QuadVF}  to construct order  $r=4$ rational approximants.  In this case, the sampling nodes for both \textsf{VF} and \textsf{QuadVF} nodes are contained in $\imunit [2.7705 \times 10^{-2},2.4527]$;
only the distribution of the nodes is different.  The resulting relative $\Hardy_2$ error norms are $8.4776 \times 10^{-1}$ for \textsf{VF} and $6.9326 \times 10^{-3}$ for \textsf{QuadVF}.
The numbers for the relative $\Hardy_\infty$ error norms were even more revealing: $1.6392$ for \textsf{VF} and
  $6.7765 \times 10^{-4}$ for the \textsf{QuadVF}.

  Note that the poor approximation resulting from \textsf{VF} is not due to a
  large residual for the underlying LS problem. On the contrary,  \textsf{VF} leads to a relative LS residual norm of  $1.8943 \times 10^{-3}$, representing a very accurate solution to the discrete LS problem; for \textsf{QuadVF},  the relative residual norm is   $3.5430 \times 10^{-4}$,  yielding in this case not only an accurate solution to the discrete LS problem but also a comparable level of accuracy as an ideal $\Hardy_2$-optimal reduced model of the same order.   \textsf{VF} does a great job in minimizing the least-squares error over the given samples; however the samples are local in nature and do not reflect the global $\Hardy_2$ and/or $\Hardy_\infty$  behavior. By choosing the sampling nodes from an appropriate quadrature rule, the discrete error that is minimized becomes a much better approximation to the true  $\Hardy_2$  error,  leading ultimately to a better rational approximation.

\subsubsection{FOM Model: \textsf{VF} vs \textsf{QuadVF}}
We repeat the same numerical experiments for the FOM Model
by taking $\ell=50$ samples (requiring only $25$ function evaluations) and applying \textsf{VF} and \textsf{QuadVF} as before. For this model,
we construct an order  $r=12$ rational approximant. The sampling interval for  \textsf{VF} and \textsf{QuadVF} is the same: $\xi \in \imunit [3.0810,\,1.6213\times 10^3]$, again differing only by their distribution in the interval.
The resulting relative $\Hardy_2$ error norms are:  $3.0903 \times 10^{-2}$ for \textsf{VF}, and $1.8561 \times 10^{-3}$ for \textsf{QuadVF};  \textsf{QuadVF} outperforms \textsf{VF}  by more than an order of magnitude in terms of accuracy.  Similar results are found for $\Hardy_\infty$ performance as well with \textsf{VF}
and  \textsf{QuadVF} leading to relative $\Hardy_\infty$ error norms of, respectively,
  $7.6430 \times 10^{-2}$  and
  $3.2204 \times 10^{-3}$.
  As in the previous example, the difference in the approximation quality is not due to the underlying discrete LS residuals.
  Both  \textsf{VF} and \textsf{QuadVF} produced very accurate LS solutions with relative residual norms of  $8.1809 \times 10^{-5}$ and $4.6997 \times 10^{-5}$, respectively.  The improved  node and weight  selection of \textsf{QuadVF} appears to be the determining factor for the improved  quality of the rational approximation.
However, even \textsf{QuadVF} does not match the high-fidelity optimal rational approximations. For this example, \textsf{IRKA} produces final reduced models with relative $\Hardy_2$ and $\Hardy_\infty$  errors of
$1.9200 \times 10^{-4}$ and  $2.1157\times 10^{-4}$, respectively; an order of magnitude better in both cases.

\subsubsection{Heat Model: \textsf{QuadVF} vs \textsf{IRKA}} \textsf{QuadVF} is based on the discretization of  the true $\Hardy_2$ norm. Therefore in this example, we investigate numerically how the solution of the quadrature-based discrete $\Hardy_2$ minimization problem compares to the the solution of the continuous  $\Hardy_2$ problem by \textsf{IRKA} as the number of sampling points $\ell$ increases. We use the Heat Model and construct order $r=2$ rational approximants using   \textsf{QuadVF}  and \textsf{IRKA}. Let $H$, $H_1$,$H_2$ denote, respectively, the full-order model, the reduced model by \textsf{IRKA} and the reduced model by \textsf{QuadVF} .
In Table \ref{table:quadvf_vs_irka} below, we list the relative $\Hardy_2$ distances between $H_1$ and $H_2$ as
$\ell$ increases in addition to
the relative  $\Hardy_2$ distances between the full and two reduced models:

\begin{table}[hhh]
\centering
\begin{tabular}{|c||c|c|c|}
\hline
 $\ell$& $\displaystyle{\frac{\| H_1 - H_2\|_{\Hardy_2}}{\| H_1 \|_{\Hardy_2}}}$ & $\displaystyle{\frac{\| H - H_1\|_{\Hardy_2}}{\| H \|_{\Hardy_2}}}$ & $\displaystyle{\frac{\| H - H_2\|_{\Hardy_2}}{\| H \|_{\Hardy_2}}}$   \\
 \hline\hline
 $10$ &  $1.1919\times 10^{-2}$&   $3.9483\times 10^{-2}$& $4.1348\times 10^{-2}$\\ \hline
  $100$ &  $3.8795\times 10^{-3}$& $3.9483\times 10^{-2}$ &$ 3.9681\times 10^{-2}$\\ \hline
   $1000$ & $1.0239\times 10^{-3}$ &$3.9483\times 10^{-2}$ &$3.9497 \times 10^{-2}$\\ \hline
    $5000$ &  $5.2313\times 10^{-4}$& $3.9483\times 10^{-2}$& $3.9487\times 10^{-2}$\\ \hline
     $15000$ & $4.4926\times 10^{-4}$ &$3.9483\times 10^{-2}$&$ 3.9486\times 10^{-2}$\\ \hline
\end{tabular}
\label{table:quadvf_vs_irka}
\caption{Relative $\Hardy_2$ distances vs $\ell$}
\end{table}
Table  \ref{table:quadvf_vs_irka}  illustrates that for this numerical example, as $\ell$ increases, the solution of the discrete $\Hardy_2$ problem via \textsf{QuadVF} is converging to the true $\Hardy_2$ solution. This is an encouraging result confirming that an effective quadrature-based selection for the discretized $\Hardy_2$ problem  might yield rational approximants close to those of the true, continuous problem. These issues will be
further studied and presented in \cite{Beattie-Drmac-Gugercin:2013-QIRKA}. For comparison, we increased the sampling size for the \textsf{VF}  as well. However, even with $\ell = 15000$, \textsf{VF}  produced a rational approximant, $H_3(s)$, with relative $\Hardy_2$ distances
$$
\displaystyle{\frac{\| H_1 - H_3\|_{\Hardy_2}}{\| H_1 \|_{\Hardy_2}}} = 9.8470 \times 10^{-1}\quad\mbox{and}\quad
\displaystyle{\frac{\| H - H_3\|_{\Hardy_2}}{\| H_3 \|_{\Hardy_2}}} = 9.8503 \times 10^{-1},
$$
The contrast with \textsf{QuadVF} underscores the value of sampling guided by an effective quadrature rule.

\subsubsection{ISS1R Module: \textsf{QuadVF} vs \textsf{VF}}
We use the ISS 1R module \cite{gugercin2001iss} with $n=270$ and approximate it with a model of order $r=16$. We first use \textsf{QuadVF} and  $25$ function evaluations ($\ell=50$ nodes in $25$ complex conjugate pairs).
The relative $\Hardy_2$  and $\Hardy_\infty$  errors of \textsf{QuadVF} were $7.2156\times 10^{-2}$ and $2.4448\times 10^{-2}$. The relative $\Hardy_2$  and $\Hardy_\infty$  errors of \textsf{IRKA} (using the same initial
poles as \textsf{QuadVF}) were, respectively, $1.4474\times 10^{-2}$ and
$5.5595 \times 10^{-3}$ -- lower, as expected.
Next, for comparisons, we use the same interval
$\imunit[1.2324\times 10^{-1}, 6.4853\times 10^{1}]$
containing the quadrature nodes, and replace the nodes by
the same number of \emph{(i)} linearly spaced points, and \emph{(ii)} logarithmically spaced points.
Then \textsf{VF}
is run with those points.  For the case of linearly spaced points, \textsf{VF} produced relative
$\Hardy_2$ and $\Hardy_\infty$  errors of $104.24\%$ and $99.79\%$, respectively, almost two orders of magnitude higher errors than \textsf{QuadVF}. For logarithmically spaced points, \textsf{VF} performed better and produced relative $\Hardy_2$ and $\Hardy_\infty$  errors $3.2872 \times 10^{-1}$ and $1.2257 \times 10^{-1}$; still much less accurate than \textsf{QuadVF}.  The Bode plots of the full-model and all four rational approximants are shown in Figure \ref{FIG:SS-VFQ}.

 \begin{figure}
\begin{center}
 \includegraphics[width=2.95in,height=2.1in]{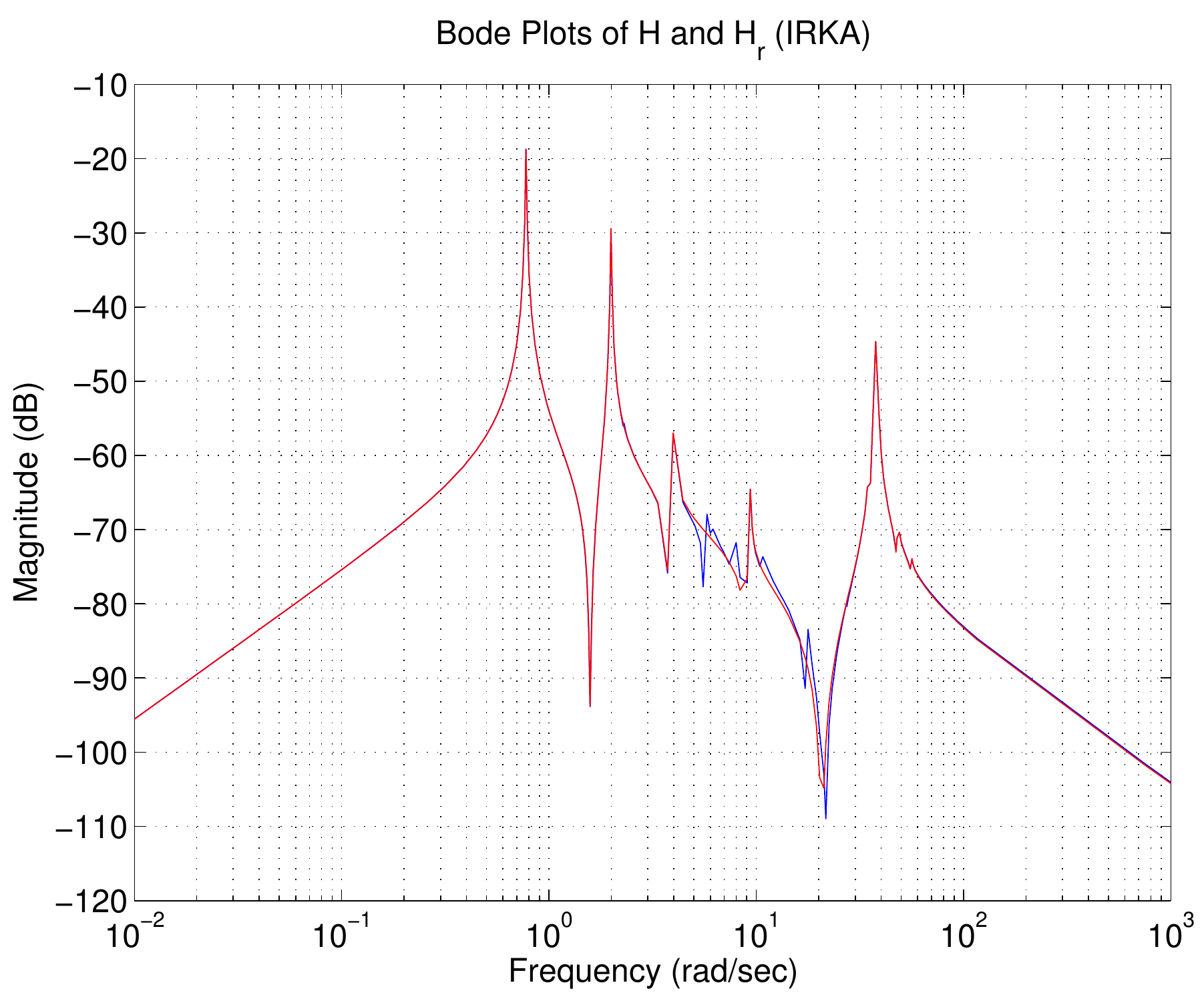}
 \includegraphics[width=2.95in,height=2.1in]{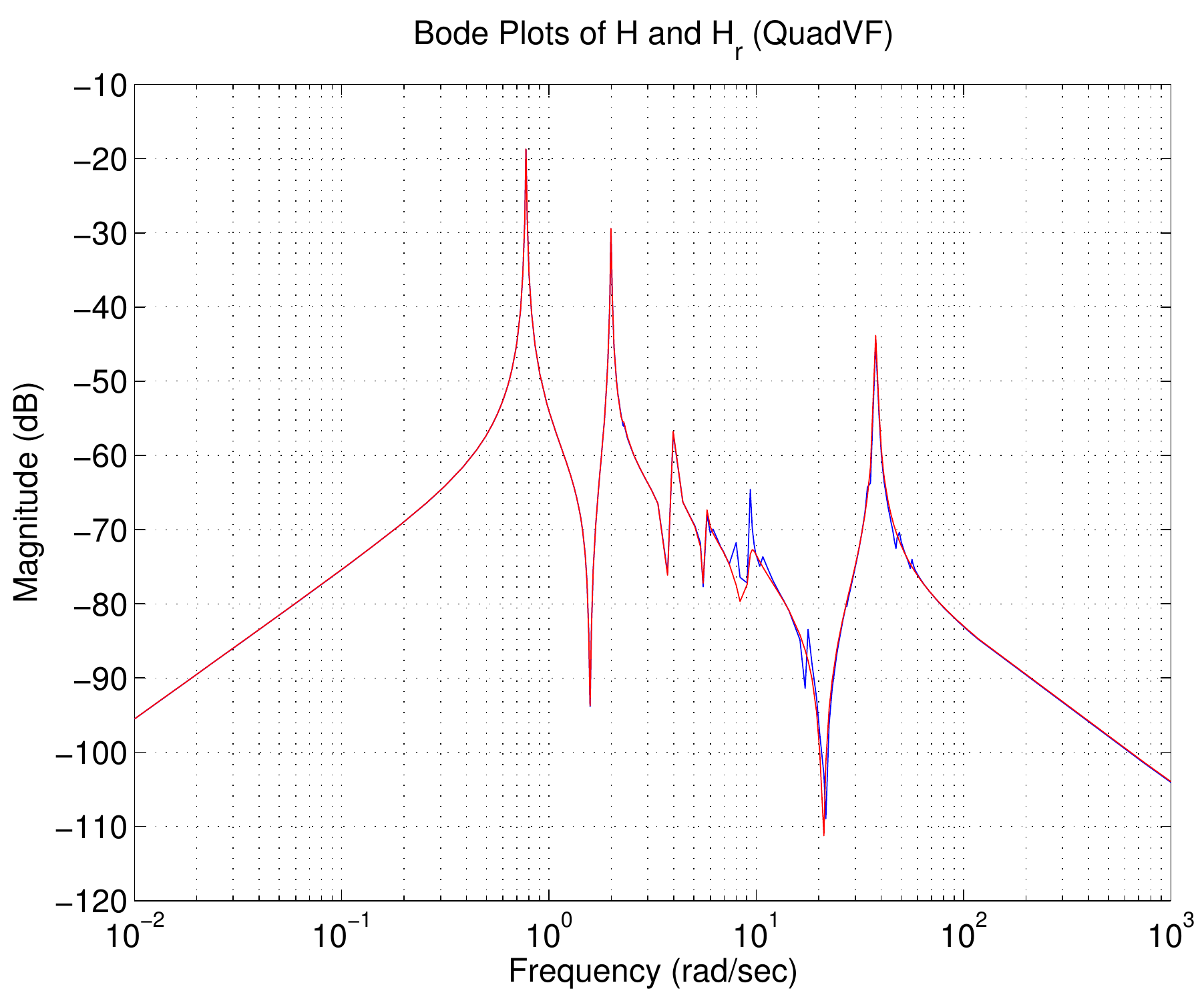}
 
 \includegraphics[width=2.95in,height=2.1in]{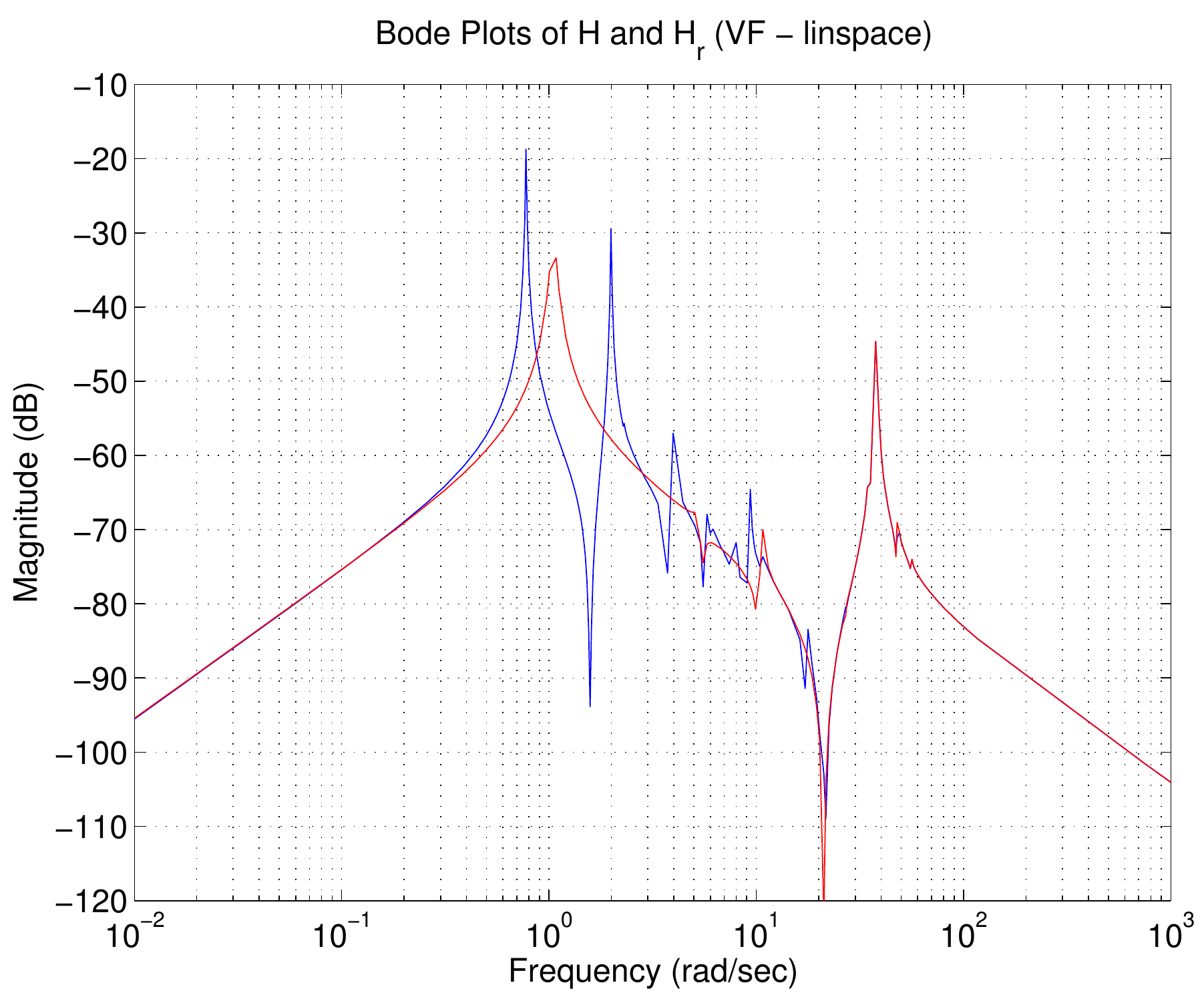}
 \includegraphics[width=2.95in,height=2.1in]{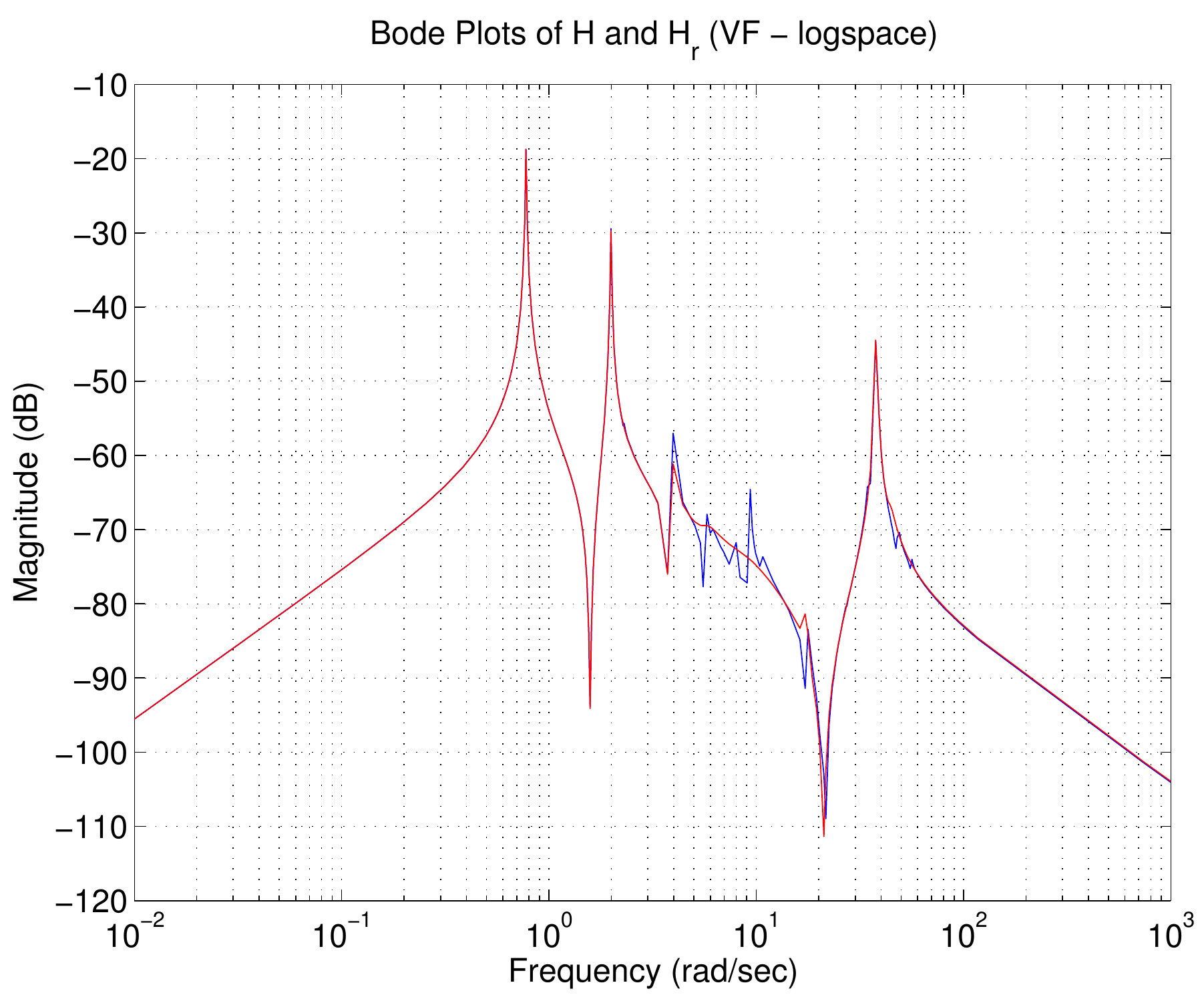}
 \end{center}

  \caption{\label{FIG:SS-VFQ} Amplitude Bode plots of the original system (blue line in every plot) and four rational approximations (red line) (Top-left: \textsf{QuadVF}, Top-right: \textsf{IRKA}, Bottom-left: \textsf{VF} with linearly spaced points, Bottom-right: \textsf{VF} with logarithmically spaced points.)}
\end{figure}

\begin{remark}
{\em
Recently, Hochman, Leviatan and White \cite{Hochman-Leviatan-White-2012} also formulated rational least squares approximation
using the information from the quadrature nodes. There, the problem is to find real valued potential $U$ that satisfies Laplace equation
in a simply connected domain $\Omega\subset \mathbb{R}^2$ and the Dirichlet boundary condition $U_{|_\Gamma}=f$ on the boundary curve $\Gamma$ of $\Omega$.
The idea is to approximate  $U$ with  the truncated real part $\hat{U}$ of a weighted sum $W$ of complex dipole potentials, and to enforce the boundary condition
on $\Gamma$ by minimizing $\|\hat{U} - f\|_\Gamma$, where $\|\cdot\|_\Gamma$ is induced by the inner product
$(u,v)_\Gamma = \int_0^1 u(z(s))v^*(z(s))\lambda(s) ds$ along $\Gamma$.
(Here $z(s)$ is a parametrization of $\Gamma$ and $\lambda(s)$ is a positive weight function.) Discretizing the norm introduces
the quadrature nodes.
}
\end{remark}

\subsection{Vector fitting in a discrete Sobolev norm}\label{SS=Sobolev-VF}
Incorporating derivative information into function approximation strategies (e.g., by penalizing roughness of the error function, or forcing Hermite interpolation at selected points) often can produce significantly higher fidelity approximations at only marginally increased cost.  Many interpolatory model reduction methods, including \textsf{IRKA}, construct rational approximants, $H_r(s)$, that match the value of $H(s)$ together with some of its derivatives at selected interpolation points, a type of generalized Hermite interpolation.  Since derivatives in the frequency domain are associated with moments in the time domain, the expression ``moment matching methods",
as exemplified e.g., by the ``Pad\'{e} via Lanczos" (PVL) method  \cite{Feldmann:2006:ELC:2298628.2302601}, refers also to a similar generalized Hermite interpolation strategy.

Chen, Zheng, and Fang \cite{Chen-Zheng-Fang-2003}
included derivatives in their modification of \textsf{VF},
leading to what they termed  ``Moment Matching Vector Fitting",  a
multipoint moment matching scheme with the approximating rational function given in barycentric form.
Derivative conditions that are compatible with the \textsf{VF} framework can be obtained by differentiating the expression, $H(s) d(s) = n(s)$. For example, to match the first derivative,
 one uses the condition $d'(s)H(s) + d(s)H'(s)-n'(s)=0$, which is a linear expression
 in the coefficients of $n(s)$ and $d(s)$.
Based on this expression and similar ones for higher derivatives,  Chen, \textit{et al.} in \cite{Chen-Zheng-Fang-2003} derived a system of equations that incorporate derivative conditions. The assumed barycentric form of the approximant then produces a coefficient matrix with a Cauchy-like structure similar to what is obtained for \textsf{VF}.

In this section, we develop a somewhat different approach toward incorporating derivative information into  \textsf{VF}.  Analogous to our approach for \textsf{QuadVF}, we begin with an approximation problem formulated with respect to an appropriate continuous norm and then discretize, making use of effective quadrature points and weights.
Derivative conditions arise differently than in \cite{Chen-Zheng-Fang-2003}, leading to a significant difference in the diagonal scaling.

Given $H(s)$ and sampling nodes, $\xi_i$, we seek a rational function, $H_r(s)$, that will yield good approximations not only to  $H(\xi_i)$ but also to $H'(\xi_i)$, in the least-squares sense. Restated formally, the problem is to find an
$r$th order stable rational approximant:
\begin{equation}\label{eq:VF-Hermite}
\begin{array}{c}
H_r(s) = \frac{n(s)}{d(s)} \equiv \frac{{\sum_{j=1}^r \frac{\phi_j}{s - \pol_j}}}
{{ \sum_{j=1}^r \frac{\varphi_j}{s - \pol_j}+1}},\\[.2in]
\mbox{such that }\;\; \sum_{i=1}^\ell (\rho_{i0}^2
| H_r(\xi_i) - H(\xi_i)|^2 +\rho_{i1}^2
| H'_r(\xi_i) - H'(\xi_i)|^2) \longrightarrow \min.
\end{array}
\end{equation}
There is a significant difference in our problem formulation (\ref{eq:VF-Hermite}) and that of \cite{Chen-Zheng-Fang-2003}.
We view the minimization problem considered in (\ref{eq:VF-Hermite}), as the discretization of a minimization problem formulated now with respect to a continuous Sobolev-type $\Hardy_2$ norm,
$$
\big\|H - H_r\big\|^2_{\star} = \big\| H - H_r\big\|^2_{\Hardy_2} +  \big\| H' - H'_r\big\|^2_{\Hardy_2},
$$
and apply an appropriate quadrature rule (see e.g. \cite{Kim:2002:QuadratureFirstDerivatives}) to determine nodes $\xi_i$ and weights $\rho_{i0}$, $\rho_{i1}$ in (\ref{eq:VF-Hermite}).  This has the effect of penalizing roughness of the error function, $H - H_r$, and will yield a different rational approximant to $H(s)$.  For an overview of derivative-weighted least squares approximation, we refer to \cite[\S 3.2.3]{Gautschi-2004}.

To arrive at a \textsf{VF} iteration for (\ref{eq:VF-Hermite}), first approximate the derivative error
\begin{align}
H'(s) - H_r'(s) =& H'(s)+ \frac{n(s)d'(s)-n'(s)d(s)}{d^2(s)} = \frac{d(s)H'(s)+d'(s)H_r(s)-n'(s)}{d(s)} \nonumber \\
& \approx \frac{d(s)H'(s)+d'(s)H(s)-n'(s)}{d(s)}.  \label{derivApprox}
\end{align}

Then approximating the $\Hardy_2$ norms with quadrature rules and incorporating the rescaling characteristic of the \textsf{SK} iteration produces a weighted LS problem that appears as
\begin{align*} \label{discSobErr}
\big\|H - H_r\big\|^2_{\star} \approx & \sum_{i=1}^\ell   \frac{\rho_{i0}^2}{|d^{(k)}(\xi_i)|^2} \left| \sum_{j=1}^r \frac{\phi_j^{(k+1)}}{\xi_i-\pol_j^{(k)}}  - \sum_{j=1}^r \frac{H(\xi_i)}{\xi_i-\pol_j^{(k)}} \varphi_j^{(k+1)} -H(\xi_i) \right|^2  \\
+\sum_{i=1}^\ell &  \frac{\rho_{i1}^2}{|d^{(k)}(\xi_i)|^2} \left|
\sum_{j=1}^r \frac{-\phi_j^{(k+1)}}{\left(\xi_i-\pol_j^{(k)}\right)^2}  + \sum_{j=1}^r \left( \frac{H(\xi_i)}{\left(\xi_i-\pol_j^{(k)}\right)^2}
- \frac{H'(\xi_i)}{\xi_i-\pol_j^{(k)}}\right) \varphi_j^{(k+1)} -H'(\xi_i)\right|^2 .
\end{align*}

The structure of the LS matrix (cf.(\ref{eq:LS})-(\ref{eq:A=PC})) becomes more complicated:
Set $D_\xi' = \mathrm{diag}(h')$, $h'=(H'(\xi_i))_{i=1}^\ell$, $W_j=\mathrm{diag}(\rho_{ij})_{i=1}^\ell$, ($j=0,1$),
$\Delta^{(k)}=\mathrm{diag}(1/|d^{(k)}(\xi_i)|)_{i=1}^\ell$, and $\mathcal{C}^{(k)}_{ij}=1/(\xi_i-\lambda_j^{(k)})$.
 The new LS
problem reads
\begin{equation}\label{eq:Sobolev-ls-scaled}
\left\| \begin{pmatrix} W_0 \Delta^{(k)} & 0 \cr 0 & {W}_1 \Delta^{(k)}\end{pmatrix}
\left\{ \begin{pmatrix} \mathcal{C}^{(k)} & -D_\xi \mathcal{C}^{(k)} \cr - (\mathcal{C}^{(k)}\circ\mathcal{C}^{(k)}) &
 D_\xi (\mathcal{C}^{(k)}\circ\mathcal{C}^{(k)}) - D_\xi'\mathcal{C}^{(k)}\end{pmatrix}
 \begin{pmatrix} \phi_{1:r}^{(k+1)} \cr \varphi_{1:r}^{(k+1)}\end{pmatrix} - \begin{pmatrix} h \cr h' \end{pmatrix} \right\}\right\|_2 \rightarrow \min
\end{equation}
where ``$\circ$" denotes the
Hadamard matrix product.  

The final expression of (\ref{derivApprox}) is approximate because a correction term, 
$\frac{d'(s)}{d(s)}(H(s)-H_r(s))$, has been dropped.  
This additional term may be retained and incorporated into the final LS problem  (\ref{eq:Sobolev-ls-scaled}), although the additional complexity might not be justified. 
For example, one may approximate the correction term evaluated at $s=\xi_i$ as 
$$
\frac{d'(\xi_i)}{d(\xi_i)}(H(\xi_i)-H_r(\xi_i))\approx \frac{d^{(k+1)}(\xi_i)}{d^{(k)}(\xi_i)}\left(H(\xi_i)- \frac{n^{(k)}(\xi_i)}{d^{(k)}(\xi_i)}\right).
$$
This yields a more complicated, though similarly structured LS coefficient matrix.
We believe that this is not necessary in practice since the effect of penalizing derivative error appears to be achieved quite effectively with the simpler expression.  
 Note that the first part of the Sobolev error expression, $\|H - H_r\|^2_{\star}$, penalizes the magnitude of $H(\xi_i)-H_r(\xi_i)$ suggesting that the correction term that has been omitted will become small in any case. In addition, as the iteration progresses, the residues of $d(s)$ are expected to converge to  $0$, so that $d(s)\rightarrow 1$ and 
 $d'(s)\rightarrow 0$ almost everywhere, further diminishing the term that has been omitted. 

Adopting the pole relocation and rescaling strategies characteristic of \textsf{VF}, we find
\begin{proposition}\label{Prop:Sobolev-VF}
By a change of barycentric representation, the LS problem (\ref{eq:Sobolev-ls-scaled}) can be replaced by
\begin{equation}
\left\| \begin{pmatrix} W_0  & 0 \cr 0 & {W}_1 \end{pmatrix}\!\!
\left\{ \begin{pmatrix} \mathcal{C}^{(k+1)} & -D_\xi \mathcal{C}^{(k+1)} \cr - (\mathcal{C}^{(k+1)}\circ\mathcal{C}^{(k+1)}) &
 D_\xi (\mathcal{C}^{(k+1)}\circ\mathcal{C}^{(k+1)}) - D_\xi'\mathcal{C}^{(k+1)}\end{pmatrix}
 \begin{pmatrix} \tilde{\phi}_{1:r}^{(k+1)} \cr \tilde{\varphi}_{1:r}^{(k+1)}\end{pmatrix} - \begin{pmatrix} h \cr h' \end{pmatrix} \right\}\right\|_2 \rightarrow \min,
\end{equation}
where $\mathcal{C}^{(k+1)}_{ij}=1/(\xi_i-\lambda_j^{(k+1)})$, and $(\lambda_j^{(k+1)})_{j=1}^\ell$ are
the zeros of $d^{(k)}(s)$.
\end{proposition}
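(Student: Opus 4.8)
I would prove this by mirroring, block by block, the algebra that already turned the weighted Sanathanan-Koerner step into the unweighted vector fitting step in (\ref{eq:VF_unweighted})--(\ref{eq:VF}). Write the step-$(k+1)$ numerator and denominator in the current ($\lambda^{(k)}$) basis as $\tilde{n}^{(k+1)}(s)=\sum_{j=1}^{r}\phi_j^{(k+1)}/(s-\lambda_j^{(k)})$ and $\tilde{d}^{(k+1)}(s)=1+\sum_{j=1}^{r}\varphi_j^{(k+1)}/(s-\lambda_j^{(k)})$, and set $G(s)=\tilde{n}^{(k+1)}(s)-H(s)\,\tilde{d}^{(k+1)}(s)$. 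Clearing denominators, $\tilde{n}^{(k+1)}=\tilde{p}/\prod_{j}(s-\lambda_j^{(k)})$ with $\deg\tilde{p}\le r-1$ and $\tilde{d}^{(k+1)}=\tilde{q}/\prod_{j}(s-\lambda_j^{(k)})$ with $\tilde{q}$ monic of degree $r$. By (\ref{PoleZero_d_k}) the relocated poles $\lambda_j^{(k+1)}$ are the zeros of $d^{(k)}=\tilde{d}^{(k)}$, so $d^{(k)}(s)=\prod_{j}(s-\lambda_j^{(k+1)})/\prod_{j}(s-\lambda_j^{(k)})$. Defining new barycentric coefficients by the partial-fraction expansions $\hat{n}^{(k+1)}(s)=\tilde{p}(s)/\prod_{j}(s-\lambda_j^{(k+1)})=\sum_j\tilde{\phi}_j^{(k+1)}/(s-\lambda_j^{(k+1)})$ and $\hat{d}^{(k+1)}(s)=\tilde{q}(s)/\prod_{j}(s-\lambda_j^{(k+1)})=1+\sum_j\tilde{\varphi}_j^{(k+1)}/(s-\lambda_j^{(k+1)})$ (valid since the $\lambda_j^{(k+1)}$ are generically distinct), the key identity is the pointwise-in-$s$ relation $G(s)/d^{(k)}(s)=\hat{n}^{(k+1)}(s)-H(s)\hat{d}^{(k+1)}(s)=:\hat{G}(s)$.

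The value-fitting (first) block transforms exactly. Expanding the definitions shows that the $i$-th entry of the unscaled first block of (\ref{eq:Sobolev-ls-scaled}) is $G(\xi_i)$, so the $i$-th scaled residual entry is $\rho_{i0}\,G(\xi_i)/d^{(k)}(\xi_i)$, whose modulus equals $\rho_{i0}\,|\hat{G}(\xi_i)|$ by the identity above; here $\Delta^{(k)}$, $W_0$, $W_1$ are real and positive, so only moduli matter and the unimodular factor $d^{(k)}(\xi_i)/|d^{(k)}(\xi_i)|$ is immaterial to the least squares objective. Since $\hat{n}^{(k+1)}$, $\hat{d}^{(k+1)}$ carry the same barycentric form as $\tilde{n}^{(k+1)}$, $\tilde{d}^{(k+1)}$, the unscaled first-block residual of the claimed problem is likewise $\hat{G}(\xi_i)$, and the two objectives agree on this block. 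This part is just (\ref{eq:VF_unweighted})--(\ref{tildephi}) applied to the $\rho_{i0}$-weighted rows.

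The derivative (second) block is the delicate point and, I expect, the main obstacle. Differentiating $G$ and matching terms shows that the $i$-th entry of the unscaled second block of (\ref{eq:Sobolev-ls-scaled}) equals $G'(\xi_i)$, so the scaled entry is $\rho_{i1}\,G'(\xi_i)/d^{(k)}(\xi_i)$. The target entry of the new second block is $\rho_{i1}\,\hat{G}'(\xi_i)$, and differentiating $\hat{G}=G/d^{(k)}$ yields
\[
\hat{G}'(\xi_i)=\frac{G'(\xi_i)}{d^{(k)}(\xi_i)}-\frac{(d^{(k)})'(\xi_i)}{d^{(k)}(\xi_i)}\,\hat{G}(\xi_i).
\]
Hence the transformed and target residuals differ by the correction term $\rho_{i1}\,\frac{(d^{(k)})'(\xi_i)}{d^{(k)}(\xi_i)}\,\hat{G}(\xi_i)$. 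Because $\hat{G}(\xi_i)=\hat{d}^{(k+1)}(\xi_i)\big(H_r^{(k+1)}(\xi_i)-H(\xi_i)\big)$, this term is proportional to the fitting residual and to $(d^{(k)})'$; it has exactly the character of the correction term $\frac{d'}{d}(H-H_r)$ already discarded in passing from the exact error to (\ref{derivApprox}), and it vanishes in the limit $d^{(k)}(s)\to1$, i.e. once the iteration stagnates and pole-zero cancellation sets in. Discarding it --- consistently with the approximations used throughout --- replaces $G'(\xi_i)/d^{(k)}(\xi_i)$ by $\hat{G}'(\xi_i)$ in every second-block entry; the Hadamard-square blocks $-(\mathcal{C}^{(k+1)}\circ\mathcal{C}^{(k+1)})$ and $D_\xi(\mathcal{C}^{(k+1)}\circ\mathcal{C}^{(k+1)})-D_\xi'\mathcal{C}^{(k+1)}$ then arise by differentiating $\hat{n}^{(k+1)}$ and $\hat{d}^{(k+1)}$, exactly as for (\ref{eq:Sobolev-ls-scaled}). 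Assembling the two blocks gives the stated problem, now weighted only by $\mathrm{diag}(W_0,W_1)$ with Cauchy matrices built on the relocated poles $\lambda_j^{(k+1)}$. (As for (\ref{eq:Sobolev-ls-scaled}), one may instead retain the correction term, at the cost of a more complicated but similarly structured coefficient matrix.)
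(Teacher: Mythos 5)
Your argument is correct and reaches the stated conclusion, but it travels a genuinely different road from the paper's proof, and the comparison is instructive. The paper's proof is a short ``consistency of representation'' argument: it observes that the SK-type weights $1/|d^{(k)}(\xi_i)|$ are themselves representation-dependent quantities (they are the barycentric denominator of the \emph{previous} iterate evaluated at the nodes), and that if one re-expresses $n^{(k)}/d^{(k)}$ in the barycentric basis whose nodes are the zeros $\lambda_j^{(k+1)}$ of $d^{(k)}$, that denominator is identically $1$, so the weights collapse to $1$ and the Cauchy blocks are built on $\bflambda^{(k+1)}$. It does not attempt a residual-by-residual identification of the two objectives. You instead carry out exactly that identification, mirroring (\ref{eq:VF_unweighted})--(\ref{tildephi}) block by block, and in doing so you surface something the paper's proof passes over silently: the value block transforms \emph{exactly} (since $|G(\xi_i)|/|\widetilde d^{(k)}(\xi_i)|=|\widehat G(\xi_i)|$ pointwise), but the derivative block does not, because $G'/\widetilde d^{(k)}=\widehat G'+\widehat G\,(\widetilde d^{(k)})'/\widetilde d^{(k)}$ leaves a residual correction proportional to $(\widetilde d^{(k)})'/\widetilde d^{(k)}$ times the fitting error. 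Your disposition of that term is the right one: it is of precisely the same character as the $\tfrac{d'}{d}(H-H_r)$ term already discarded in passing to (\ref{derivApprox}), it is proportional to the value-fit residual that the first block of the objective is simultaneously driving down, and it vanishes as $\widetilde d^{(k)}\to 1$ upon convergence. In short, the paper's proof buys brevity by treating the replacement as a re-instantiation of the iteration in a new basis, whereas yours buys a sharper statement of what is exact and what is only asymptotically exact in that replacement; both are legitimate, and your accounting of the derivative-block discrepancy is a worthwhile addition rather than a flaw.
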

\begin{proof}
Consider all iterations done up through step $k+1$ to have been done with fixed poles, namely $\lambda_j^{(k+1)}$
for $j=1,\ldots,\ell$.
If we want the next iterate to be represented in the barycentric form with the nodes $\lambda_j^{(k+1)}$, then, to be consistent
with the definition of the iterations (\ref{eq:relaxedNLS}), the scaling factors $1/|{d}^{(k)}(\xi_i)|$ must be computed
using the barycentric form of $H_r^{(k)}=n^{(k)}/d^{(k)}$ based on the nodes $\lambda_j^{(k+1)}$.
Now, if we represent ${n}^{(k)}/{d}^{(k)}$,
with ${d}^{(k)}$ as in (\ref{PoleZero_d_k}), with the nodes $\lambda_j^{(k+1)}$, then we obtain
$
\frac{{n}^{(k)}(s)}{{d}^{(k)}(s)} = \frac{\sum_{j=1}^r \frac{\widehat{\phi}_j^{(k+1)}}{s - \pol_j^{(k+1)}}}{1} .
$
Hence, in this representation the scaling factors are $1$.
\end{proof}

The Sobolev norm-based \textsf{VF} iteration described in Proposition \ref{Prop:Sobolev-VF} will be called \textsf{SobVF} and will be run typically until the nodes $\lambda_j^{(k)}$ converge (numerically) at some index $k_*$.
To compute our final rational  approximant, we take the converged  $\lambda_j^{(k_*)}$'s as the poles and solve
LS problem
\begin{equation}\label{eq:Sobolev-VF-residues}
\left\| \begin{pmatrix} W_0  & 0 \cr 0 & {W}_1 \end{pmatrix}\!\!
\left\{ \begin{pmatrix} \mathcal{C}^{(k_*)}  \cr - (\mathcal{C}^{(k_*)}\circ\mathcal{C}^{(k_*)}) \end{pmatrix}
{\phi}_{1:r}  - \begin{pmatrix} h \cr h' \end{pmatrix} \right\}\right\|_2 \rightarrow \min
\end{equation}
the compute the final residues $\phi_j$.
\begin{remark}
{\em
Even though obtaining the derivative information may not be always feasible (e.g., in the data driven setting), in many cases $H'(s)$ can be computed without much additional cost. For example, if a state space representation $H(s)=\bfC(s\mathbf{I}-\mathbf{F})^{-1}\bfB$
is available, then computing $H'(s) = - \bfC(s\mathbf{I}-\mathbf{F})^{-2}\bfB$ is not expensive if the function evaluation is performed using, for example, sparse direct
solvers or a Hessenberg decomposition-based method for dense computations \cite{Beattie_Drmac_Gugercin:2011_FreqResp}. The evaluation of $H(s)$ already requires
the computation of a decomposition of $(s\mathbf{I}-\mathbf{F})$ at the node $s=\xi_i$. Since evaluating $H'(s)$ at the node
$s=\xi_i$ requires solving a linear system with the same coefficient matrix, the triangular factors can be reused, and $H(s)$ and $H'(s)$ at the node $\xi_i$
are obtained with only small additional cost.
}
\end{remark}

\subsubsection{Numerical Examples for \textsf{SobVF}}
We illustrate the effectiveness of \textsf{SobVF} using two models from the
\textsf{NICONET}  Benchmark Collection, comparing results with \textsf{VF}.
Since \textsf{SobVF} uses both $H(s)$ and $H'(s)$ at the sampling nodes, we use
twice the number of nodes in \textsf{VF} in order to present a fair comparison for \textsf{VF}; that is, if we use $\ell$ nodes in (\ref{eq:VF-Hermite}), we will employ $2\ell$ in \textsf{VF}.
For brevity, instead of adapting and giving details of a Hermite quadrature rule, we simply use the weights and the nodes of the Clenshaw-Curtis formula from \S \ref{SS=SamplingviaQuadrature} in both examples.

\begin{example}\label{Example:Sobolev-VF}
{\em
The first example is the Building Model from the \textsf{NICONET} benchmark collection with order $n=48$. We have chosen this model since it is very hard to approximate and a high-fidelity approximation is achieved only for large $r$ values \cite{ASG01}. For example, to reach a relative $\Hardy_2$ error norm of $10^{-4}$, even the optimal rational approximation method \textsf{IRKA}
requires $r=40$ and then yields a relative $\Hardy_2$ of
$1.18 \times 10^{-4}$.  We pick $r=40$ and obtain the nodes and weights using \S \ref{SS=SamplingviaQuadrature}.
The range of nodes for \textsf{VF} and \textsf{SobVF} is the same; only the distribution is different.  For $\ell=25$, \textsf{VF} using $2\ell = 50$ logarithmically spaced nodes yields a relative $\Hardy_2$ error norm of $1.564$ -- quite a poor approximation. On the other hand,  using  \textsf{SobVF} as in (\ref{eq:VF-Hermite})
with $\ell = 25$ nodes yields a rational approximant with a relative $\Hardy_2$ error of $6.56\cdot 10^{-3}$.  This constitutes a three order-of-magnitude improvement over what \textsf{VF} provides without greater computational cost; recall  \textsf{VF} used twice the number of nodes as  \textsf{SobVF}.
}
\end{example}
\begin{example}  \label{Example:Sobolev-VF-Beam}
{\em
We consider the Beam Model for the \textsf{NICONET} benchmark collection.
This model has order  $n=348$. Using $\ell=25$ as in the previous example for \textsf{SobVF} approximation
and $2\ell = 50$ nodes
for \textsf{VF} approximation, we
obtain relative $\Hardy_2$ errors of $1.29$ for \textsf{VF} and $0.16$ for \textsf{SobVF}.
To obtain better approximants, we double the number of nodes to $\ell = 50$, leading to a
relative $\Hardy_2$ error norm of  $4.84\cdot 10^{-2}$ for \textsf{VF} and
and $2.85\cdot 10^{-4}$ for \textsf{SobVF}.  We observe that
for $r=40$,  the optimal  approximation method  \textsf{IRKA} yield a relative error of $2.09 \cdot 10^{-4}$. 
So, using $\ell=50$ nodes, \textsf{SobVF} very nearly achieves the accuracy captures the accuracy of 
a locally optimal approximant. To investigate how the approximants change, we increase the order to $r=70$. 
Curiously, this caused a \emph{higher} relative error of $1.84\cdot 10^{-1}$ for \textsf{VF}. 
This is mainly due to the numerical ill-conditioning of the underlying LS problem induced by increasing $r$. 
These issues are explained in more detail in \S \ref{SS=CaveatConditioning}.
On the other hand, increasing $r$ to $70$ had no apparent adverse effect on the \textsf{SobVF};
 the relative error decreased to $4.17\cdot 10^{-6}$. For comparison, note that
for $r=70$,  the relative $\Hardy_2$ error produced by \textsf{IRKA} is $5.10\cdot 10^{-7}$. Although
\textsf{IRKA} is still better (as expected),  the \textsf{SobVF} 
approximation is achieving close to the same accuracy.
  %%L=10
}
\end{example}

In both of the experiments described above, the \textsf{SobVF} approximation was substantially more accurate than
 a \textsf{QuadVF} approximation produced with the same set of nodes and weights. 
 As previously stated, this will not even be the best performance that can be expected from
\textsf{SobVF}. The full-potential of (\ref{eq:VF-Hermite})
will be realized once we adopt an appropriate quadrature rule,
much as we did in \S \ref{SS=SamplingviaQuadrature} to produce \textsf{QuadVF} .
We defer these considerations to a later time.

\section{Practical Issues} \label{sec:prac_issues}

We focus on the convergence behavior and some practical issues impacting the numerical
implementation of both \textsf{VF} and \textsf{QuadVF}.

\subsection{Unstable nodes mirroring and scaling}\label{SSS::Mirroring-and-scaling}
One of the advantages of the pole relocation step in  \textsf{VF} is that the emergence of unstable poles can be resolved
and the iterates can be steered to a stable approximant. This is achieved by reflecting those unstable nodes  (poles) that are in $\Cplx_+$  with respect to
the imaginary axis and placing them in  $\Cplx_-$. The same procedure is also employed in \textsf{IRKA}.
 Let  $\widetilde{n}^{(k)}(s)/\widetilde{d}^{(k)}(s)$ be the current approximation,  $\lambda_j^{(k+1)}$ denote the the originally computed set of zeros of $\widetilde{d}^{(k)}$
and  $\lambda_{j_t}^{(k+1)}$, $t=1,\ldots,p,$ be the $p<r$ of these poles that are in $\Cplx_+$. Then, \textsf{VF} replaces $\lambda_{j_t}^{(k+1)}$
with $-\lambda_{j_t}^{(k+1)}$  while keeping the remaining stable ones as is to obtain the new set of poles,
to be denoted by $\widehat{\lambda}_j^{(k+1)}$ {with $\widehat{\lambda}_t^{(k+1)}=-\lambda_{j_t}^{(k+1)}$, $t=1,\ldots,p$}.
From a systems theoretic perspective, the mirroring of an unstable pole $\lambda_{j_t}^{(k+1)}$ corresponds to applying an all-pass filter
 $\Phi_{j_t}(s) = (s-\lambda_{j_t}^{(k+1)})/(s+\overline{\lambda_{j_t}^{(k+1)}})$ that changes the phase of the approximant, see \cite{Hendrickx06someremarks}.
Let
 $\widehat{n}^{(k)}/\widehat{d}^{(k)}$ be the  barycentric representation corresponding to the nodes $\widehat{\lambda}_j^{(k+1)}$. Then, \textsf{VF} proceeds by solving the LS problem $\|\AC(\widehat{\bflambda}^{(k+1)}) \widehat{x}^{(k+1)} - h \|_2\longrightarrow\min$, instead of
 $\|\widehat{\Delta}^{(k)}(\AC(\widehat{\bflambda}^{(k+1)}) \widehat{x}^{(k+1)} - h )\|_2\longrightarrow\min$.
 This is not formally correct -- since the poles are changed by an external intervention, pole relocation does not compensate diagonal scaling.

 To make this step formally correct and interpretable in the framework of numerical linear algebra, we need  the barycentric representation
 $\widehat{n}^{(k)}(s)/\widehat{d}^{(k)}(s)$ of $\widetilde{n}^{(k)}(s)/\widetilde{d}^{(k)}(s)$, and the corresponding diagonal scaling $\widehat{\Delta}^{(k)}=\mathrm{diag}(1/|\widehat{d}^{(k)}(\xi_i)|)_{i=1}^\ell$ expressed using the new poles $\widehat{\lambda}_j^{(k+1)}$
(cf. the proof of Proposition \ref{Prop:Sobolev-VF}). Such a representation can be directly written down using
 \begin{equation}\label{eq:n/d-Ansatz}
 \frac{\widetilde{n}^{(k)}(s)}{\widetilde{d}^{(k)}(s)} \equiv
\frac{\widehat{n}^{(k)}(s)}{\widehat{d}^{(k)}(s)} = \frac{\sum_{j=1}^r\frac{\alpha_j^{(k)}}{s-\hat{\lambda}_j^{(k+1)}}}
{\sum_{j=1}^{{p}}\frac{\beta_j^{(k)}}{s-\hat{\lambda}_j^{(k+1)}} + 1},\;\; \widehat{d}^{(k)}(s) =
\sum_{j=1}^{{p}}\frac{\beta_j^{(k)}}{s-\hat{\lambda}_j^{(k+1)}} + 1,
\end{equation}
where the $\beta_j^{(k)}$'s must be determined so that the zeros of $\widehat{d}^{(k)}(s)$ are $\lambda_{j_t}^{(k+1)}$, $t=1,\ldots, p$.
This is an eigenvalue assignment problem in disguise and we use \cite{mehrmann1996app} to get
\begin{equation}\label{eq:beta-j-k}
\beta_j^{(k)} = \frac{{ \prod_{\ell=1}^p ( \hat{\lambda}_j^{(k+1)} + \hat{\lambda}_\ell^{(k+1)})}}
{{ \prod_{\ell=1,\ell\neq j}^p (\hat{\lambda}_j^{(k+1)} - \hat{\lambda}_\ell^{(k+1)})}},\;j=1,\ldots, p.
\end{equation}
%%%%%%%%%%%%%%%%%%%%%%%%%%%%%%%%%%%%%%%%%%%%%%
\begin{proposition}\label{PROP:Justify-mirror-poles}
Let $\widehat{d}^{(k)}(s)$ be defined as in (\ref{eq:n/d-Ansatz}), (\ref{eq:beta-j-k}). Then for any $\omega \in\mathbb{R}$,
$|\widehat{d}^{(k)}(\imunit\omega)|=1$ and the diagonal scaling matrix $\widehat{\Delta}^{(k)}$  is unitary;
 the solution does not change from that of the unscaled problem.
\end{proposition}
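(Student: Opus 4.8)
The plan is to reduce everything to the observation that $\widehat{d}^{(k)}$ is an all-pass (Blaschke-type) rational function. First I would put $\widehat{d}^{(k)}$ into closed zero--pole form. By construction its poles are the relocated nodes $\hat\lambda_t^{(k+1)}=-\lambda_{j_t}^{(k+1)}$ ($t=1,\dots,p$), its zeros are the original unstable nodes $\lambda_{j_t}^{(k+1)}$, and from (\ref{eq:n/d-Ansatz}) one has $\widehat{d}^{(k)}(s)\to 1$ as $s\to\infty$; these three facts force
\[
\widehat{d}^{(k)}(s)=\prod_{t=1}^{p}\frac{s-\lambda_{j_t}^{(k+1)}}{s-\hat\lambda_t^{(k+1)}}=\prod_{t=1}^{p}\frac{s-\lambda_{j_t}^{(k+1)}}{s+\lambda_{j_t}^{(k+1)}}.
\]
I would then verify that this agrees with the partial-fraction form in (\ref{eq:n/d-Ansatz}) by expanding the product and reading off $\res_{s=\hat\lambda_j^{(k+1)}}\widehat{d}^{(k)}(s)$, which reproduces exactly formula (\ref{eq:beta-j-k}) after substituting $\lambda_{j_t}^{(k+1)}=-\hat\lambda_t^{(k+1)}$; this is the ``eigenvalue assignment'' identity referenced in the text, and it certifies that (\ref{eq:beta-j-k}) indeed yields the stated $\widehat{d}^{(k)}$.

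Next I would record the elementary functional identity $\widehat{d}^{(k)}(s)\,\widehat{d}^{(k)}(-s)\equiv 1$, which is immediate from the product formula since each factor satisfies $\frac{s-\mu}{s+\mu}\cdot\frac{-s-\mu}{-s+\mu}=1$. Since $H$ represents a real system, its rational iterates have real coefficients, so the set of relocated nodes (those lying in $\Cplx_+$) is closed under complex conjugation; hence $\widehat{d}^{(k)}$ is a quotient of real polynomials and $\overline{\widehat{d}^{(k)}(s)}=\widehat{d}^{(k)}(\bar s)$. Evaluating on the imaginary axis, where $\bar s=-s$, and combining with the identity above gives $|\widehat{d}^{(k)}(\imunit\omega)|^{2}=\widehat{d}^{(k)}(\imunit\omega)\,\widehat{d}^{(k)}(-\imunit\omega)=1$ for every $\omega\in\R$, which is the first assertion of the proposition.

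Finally, since the sampling nodes $\xi_i$ used in \textsf{VF}/\textsf{QuadVF} lie on $\imunit\R$, the previous step gives $|\widehat{d}^{(k)}(\xi_i)|=1$ for each $i$, so every diagonal entry $1/|\widehat{d}^{(k)}(\xi_i)|$ of $\widehat{\Delta}^{(k)}$ equals $1$; in fact $\widehat{\Delta}^{(k)}=I$, which is trivially unitary. Multiplying the residual vector of a least-squares problem by a unitary matrix preserves its Euclidean norm, so the weighted problem $\|\widehat{\Delta}^{(k)}(\AC(\widehat{\bflambda}^{(k+1)})\widehat{x}^{(k+1)}-h)\|_2\to\min$ and the unscaled problem $\|\AC(\widehat{\bflambda}^{(k+1)})\widehat{x}^{(k+1)}-h\|_2\to\min$ share the same minimizer, as claimed. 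I expect the only real subtlety to be the bookkeeping in the first paragraph: confirming that (\ref{eq:beta-j-k}) produces the all-pass factor with \emph{exactly} the prescribed zeros, and noting that this both requires and is consistent with the relocated pole set being conjugation-closed — a property guaranteed here by the realness of $H$. Everything after that is routine.
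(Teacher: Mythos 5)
Your proof is correct and takes essentially the same route as the paper's: both identify $\widehat{d}^{(k)}$ with the all-pass product $\prod_{j=1}^p\frac{s+\hat{\lambda}_j^{(k+1)}}{s-\hat{\lambda}_j^{(k+1)}}$ and then invoke conjugation-closure of the relocated nodes to conclude unit modulus on $\imunit\R$. The differences are purely organizational --- you build the product from the zero/pole/normalization data and confirm (\ref{eq:beta-j-k}) by reading off residues, whereas the paper multiplies the partial-fraction form through by $\prod_{j=1}^p(s-\hat{\lambda}_j^{(k+1)})$ directly, and your identity $\widehat{d}^{(k)}(s)\,\widehat{d}^{(k)}(-s)\equiv 1$ combined with realness is an equivalent packaging of the same conjugation argument.
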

\begin{proof}
Note that
$$
\prod_{j=1}^p (\imunit\omega - \hat{\lambda}_j^{(k+1)})\, \widehat{d}^{(k)}(\imunit\omega) =
\prod_{j=1}^p (\imunit\omega - \hat{\lambda}_j^{(k+1)}) (\sum_{j=1}^p\frac{\beta_j^{(k)}}{\imunit\omega-\hat{\lambda}_j^{(k+1)}} + 1)
= \prod_{j=1}^p (\imunit\omega + \hat{\lambda}_j^{(k+1)}).
$$
Recall that the $\hat{\lambda}_j^{(k+1)}$, $j=1,\ldots, p$, are closed under complex conjugation. The claim follows.
\end{proof}

Proposition \ref{PROP:Justify-mirror-poles} justifies proceeding with the same \textsf{VF} scheme after mirroring unstable poles,
as if nothing had happened. The same applies to the \textsf{SobVF} approximation described in \S \ref{SS=Sobolev-VF}.

\subsection{Numerical convergence and stopping criterion} \label{S=Convergence}
A theoretical convergence analysis of \textsf{VF} that determines conditions on $H(s)$ and the sampling nodes so as to guarantee convergence of \textsf{VF} remains an open problem.
An instructive  analysis by Lefteriu and Antoulas \cite{Lefteriu-Antoulas-2013} showed
(using a synthetic example with $r=2$) that the fixed points of the \textsf{VF} iterations can actually
be repellant and so  that the iteration may diverge.
Convergence behavior in realistic, large-scale settings appears
not yet to have been analyzed, and, to the best of our knowledge,
there are no published stopping criteria for the \textsf{VF} iteration that can be justified
rigorously by a rigorous  error or perturbation analysis.
In this section, we try to shed some light on these issues.

Assume now the setting of \S \ref{subsec:VF} with an ideal  convergence scenario:
Suppose that for some index $k$,
 the zeros and the poles of $\widetilde{d}^{(k)}(s)$ can be
\emph{numerically matched}, so that
$\lambda_{j}^{(k+1)} \approx \lambda_j^{(k)}$, and hence $\widetilde{d}^{(k)}(s) \cong 1$.
 Restated, this means that
the optimal matching distance
\begin{equation} \label{eq:omdistance}
\Omega_{k} = \min_{\sigma\in\mathbb{S}_r}\max_{j=1:r}|\lambda_j^{(k)} - \lambda_{\sigma(j)}^{(k+1)}|\;\;\;\;\;
\mbox{(here $\mathbb{S}_r$ denotes the permutation group)}
\end{equation}
between $(\lambda_j^{(k+1)})_{j=1}^r$ and $(\lambda_j^{(k)})_{j=1}^r$ as well as  $\max_j|\widetilde{\varphi}_j^{(k)}|$ are all
\emph{sufficiently small}.
The important tasks that arise here are determining $k$ and quantifying and justifying how small is
``\emph{sufficiently small}" ?
The following observations provide the key insights.

 \emph{(i)} Recall that $\bflambda^{(k+1)}$ is the spectrum of $\mathrm{diag}(\bflambda^{(k)}) + \widetilde{\bfvarphi}^{(k)}\mathbf{e}^T$, and thus can
be considered as the spectrum of a rank-one perturbation of the matrix $\mathrm{diag}(\bflambda^{(k)})$.
Hence, by \cite[Exercise VIII.3.2]{Bhatia-MatrixAnalysis-1997},
\begin{equation}\label{eq:opt_match_bound}
\Omega_k \leq (2r-1) \|\widetilde{\bfvarphi}^{(k)}\mathbf{e}^T\|_2 \leq \sqrt{r} (2r-1)\|(\widetilde{\varphi}_j^{(k)})_{j=1}^r\|_2
\leq r(2r-1) \max_j|\widetilde{\varphi}_j^{(k)}|,
\end{equation}
where $\Omega_k$ is  the optimal matching distance defined in
(\ref{eq:omdistance}).
In other words, by monitoring $\widetilde{\bfvarphi}^{(k)}$, we can determine in advance when
 $\lambda_j^{(k)}$ converges
(up to a predetermined tolerance) and thus end the pole identification phase.

\emph{(ii)} Moreover, it can be checked that, with proper  permutation matching used to enumerate $(\lambda_j^{(k+1)})_{j=1}^r$,
the element-wise relative differences between $\AC(\bflambda^{(k+1)})$ and $\AC(\bflambda^{(k)})$ are bounded by
\begin{equation} \label{eq:Aijdiff}
\max_{i,j} |(\AC(\bflambda^{(k)})_{ij} - \AC(\bflambda^{(k+1)})_{ij})/\AC(\bflambda^{(k+1)})_{ij} | \leq \frac{\Omega_k}{\mu_k},
\;\;\mbox{where}\;\;\mu_{k} = \min_{i=1:\ell}\min_{j=1:r} |\xi_i-\lambda_j^{(k)}| .
\end{equation}
Note that we can use (\ref{eq:opt_match_bound}) to estimate in advance that the  difference
(\ref{eq:Aijdiff})
 is less than
given $\epsilon$ by checking if $\Omega_k\leq \mu_k \epsilon$, i.e., if $\max_j|\widetilde{\varphi}_j^{(k)}| \leq  \mu_k \epsilon /(2r^2-r)$.

\emph{(iii)} Finally, another plausible and justifiable backward stable stopping criterion with a given tolerance threshold $\varepsilon$
can be seen in (\ref{tildephi}) with $k\leftarrow k-1$ as follows: From the estimate
$$
{\left| \sum_{j=1}^r \frac{\widetilde\varphi_j^{(k)}}{\xi_i-\lambda_j^{(k)}} \right|} \leq
\sqrt{r} \frac{\|(\widetilde\varphi_j^{(k)})_{j=1}^r\|_2}{\mu_k}\leq r \max_{j=1:r} |\widetilde\varphi_j^{(k)}|
\frac{1}{\mu_{k}},\;\; 
$$
valid for all $i=1,\ldots, \ell$,
where $\mu_k$ is as defined in (\ref{eq:Aijdiff}),
we conclude that if $\max_j|\widetilde{\varphi}_j^{(k)}| \leq \varepsilon \mu_{k}/r$, the
residue identification is simple because $\widetilde{n}^{(k)}(s) = \sum_{j=1}^r \frac{\widetilde\phi_j^{(k)}}{s-\lambda_j^{(k)}}$
can be taken as the final approximant in the pole-residue representation but now with a relative backward error of at most
$\varepsilon$ in the measurements $H(\xi_i)$.
However, to be on the safe side, the common practice of \textsf{VF} is to use  the ``converged" poles and then solve the LS problem $\|\AC^{(k)}(:,1:r) (\phi_j^{(k)})_{j=1}^r - h\|_2\rightarrow\min$ to determine the residues.

In practice, when the \textsf{VF} iterations converge, one observes that $\|(\widetilde\varphi_j^{(k)})_{j=1}^r\|_2$ tends to zero and  the estimate (\ref{eq:opt_match_bound}) reliably predicts the change in the nodes $(\lambda_j^{(k)})_{j=1}^r$ from step $k$ to step $k+1$.
However, if unstable nodes appear, they are mirrored as explained in \S \ref{SSS::Mirroring-and-scaling} and one works with the
$\widehat{\lambda}_j^{(k+1)}$'s instead of
the ${\lambda}_j^{(k+1)}$'s, which, in turn, means that (\ref{eq:opt_match_bound}) does not apply. In fact, it can happen that at each iteration until the very end, a subset of the
poles need to be flipped to $\Cplx_-$ and neither the $\widetilde{d}^{(k)}(s)$ converge to unity nor the nodes ${\lambda}_j^{(k)}$ settle as $k\rightarrow\infty$. That, however, does not necessarily means that the approximation is hopelessly bad. The following example illustrates this fact.
\begin{example}
{\em
We take the Beam model with $n=348$ from the \textsf{NICONET} collection  and obtain  order
$r=17$ and $r=18$ approximants using $\ell = 25$ conjugate pairs of logarithmically spaced nodes $\xi_i$.
The \textsf{VF} convergence history shown in Figure \ref{FIG:BEAM:convergence} illustrates two phenomena. In the  figure on the left with $r=18$, the value of
$\|(\widetilde{\varphi}_j^{(k)})_{j=1}^r\|_2$ settles around $8.042$ while the maximal relative change of the nodes drops down to the
level of $10^{-13}$. Thus,  \textsf{VF} converges but with $\|(\widetilde{\varphi}_j^{(k)})_{j=1}^r\|_2 \neq 0$.  The relative $\Hardy_2$ error norm of  the resulting approximatant is  $5.11\cdot 10^{-2}$.
The right figure, on the other hand, with $r=17$ shows a zigzag pattern for $\|(\widetilde{\varphi}_j^{(k)})_{j=1}^r\|_2$ (indicating two accumulation
points of the vectors $\widetilde{\bfvarphi}_j^{(k)}$, $k=1,2,\ldots$) and the $\mathcal{O}(1)$
relative changes in the nodes from step to step (indicating here too two accumulation points, where the zigzag comes from computing the relative distances). Thus, neither $\{\widetilde{\bfvarphi}^{(k)}\}$ nor
$\{\bfpol^{(k)}\}$ converges.
However, the iteration exhibits a periodicity in the behavior of the nodes.
With the lag of $2$ iterations, we see that $\| (\bfpol^{(k)} - \bfpol^{(k+2)})./\bfpol^{(k)} \|_\infty$ drops to the level of $10^{-10}$. In other words, the nodes cycle with the period of $2$.
The relative  $\Hardy_2$   error  norms of the approximants are around {$2.11\cdot 10^{-2}$ and  $2.45\cdot 10^{-2}$},
depending on the index $k$. It should be noted that the patterns shown on the right figure are not due to the flipping of unstable nodes. Even when that
mechanism is switched off, in this example we observe nearly the same periodic behaviors but in that case with an eventual unstable approximant, resulting in infinite
$\Hardy_2$ approximation error.
\begin{figure}
\begin{center}
 \includegraphics[width=2.95in,height=2.1in]{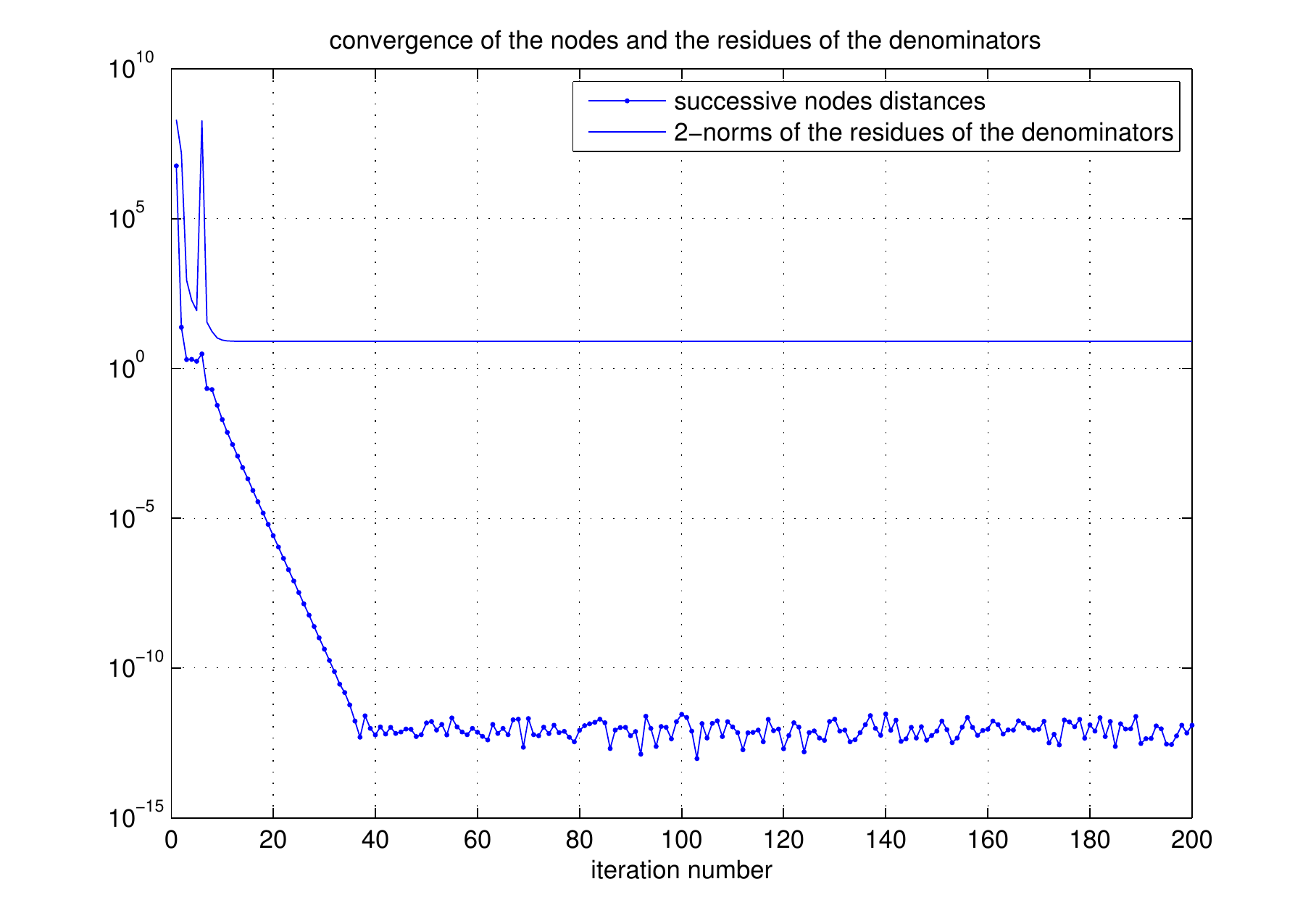}
 \includegraphics[width=2.95in,height=2.1in]{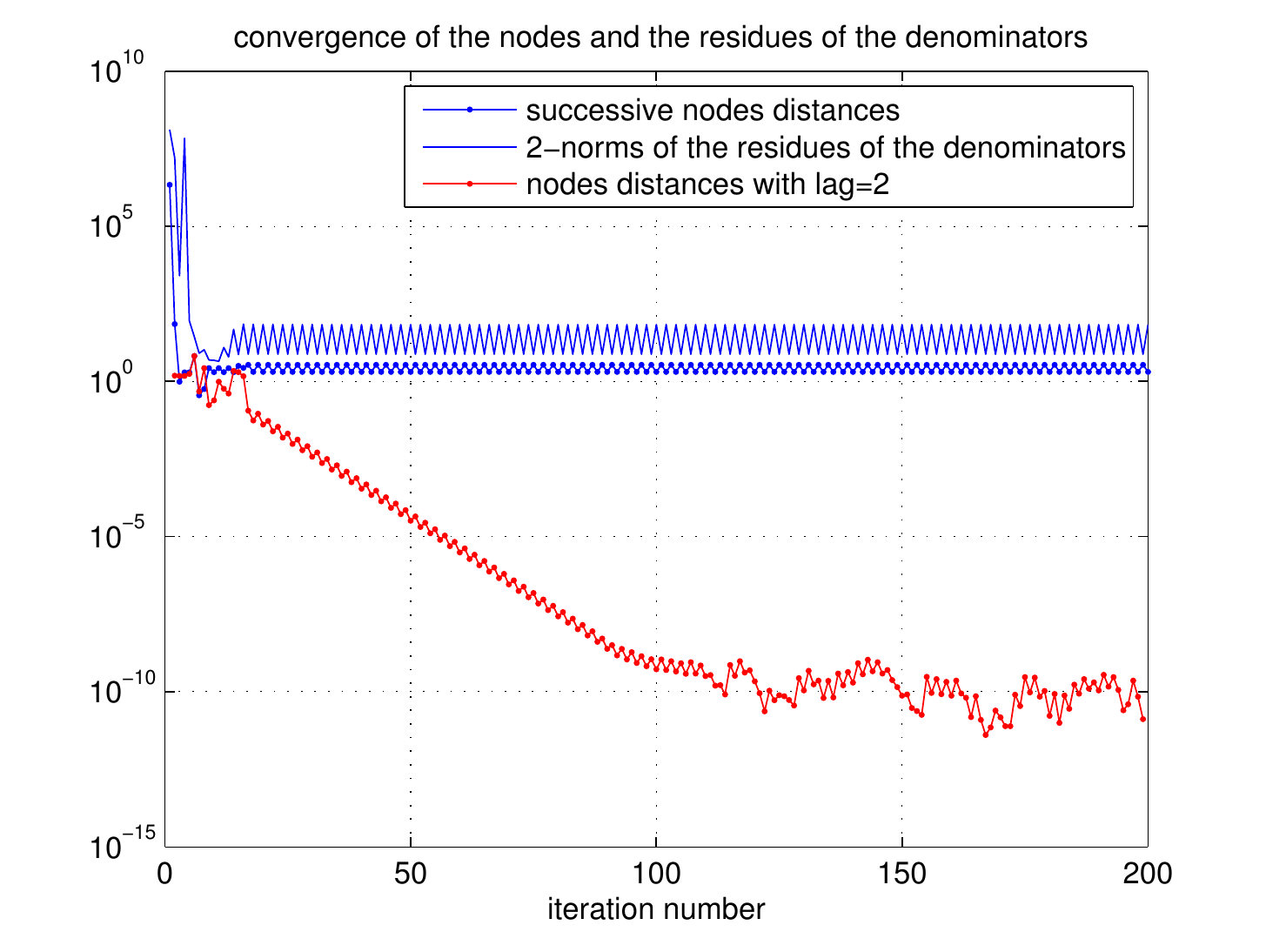}
 \end{center}
  \caption{\label{FIG:BEAM:convergence} Left ($r=18$): The distance between consecutive sets of computed poles,
  $\| (\bfpol^{(k)} - \bfpol^{(k+1)})./\bfpol^{(k)} \|_\infty$ and $\| (\widetilde{\varphi}_j^{(k)})_{j=1}^r\|_2$.
  Right ($r=17$): The relative differences $\| (\bfpol^{(k)} - \bfpol^{(k+1)})./\bfpol^{(k)} \|_\infty$  (jumping between $2.02$ and $3.41$),
  $\| (\bfpol^{(k)} - \bfpol^{(k+2)})./\bfpol^{(k)} \|_\infty$, and the residues of the denominators
  $\| (\widetilde{\varphi}_j^{(k)})_{j=1}^r\|_2$ (jumping between $7.51$ and $68.66$).
}
\end{figure}
}
\end{example}

\begin{remark}
{\em
The similar phenomenon is observed in \textsf{IRKA} as well, see \cite{Beattie-Drmac-Gugercin:2013-QIRKA}.
\emph{To cope with this behavior, the outer loop that governs the \textsf{VF} (or \textsf{IRKA}) iterations must have memory and be equipped with
a device capable of recognizing periodicity numerically (up to a tolerance).} Note that this is a more sophisticated
control of the iterations, where periodicity is just one of many possible events that can be captured.
For these types of iterations,
the usual memoryless loop breaking (comparing only consecutive steps, or testing against a stopping criterion) is not  enough. Instead,
for instance, a loop control with memory can be used for early detection of upcoming numerical convergence and better steering of
the iterations, see e.g. \cite{drm-ves-VW-2}.
Clearly, if the poles enter a periodic behavior, the distance $\delta_k = dist(\bfpol^{(k)},\bfpol^{(k-1)})$ will become periodic; and consequently
 it is enough to test the sequence $(\delta_k)$ for periodicity. If $\tau\geq 1$ is the estimated period, then
we have  $\tau$ candidate sets of poles $\bfpol^{(k)},\ldots, \bfpol^{(k+\tau-1)}$ for the approximation.
If these poles are not satisfactory, the looping must be interrupted.
Details are deferred to a subsequent work.
}
\end{remark}
\subsection{Avoiding ill-conditioning via regularization}\label{SS=CaveatConditioning}
The matrices $\BV$ in (\ref{eq:Bmatrix}) and $\AC$ in (\ref{eq:A=PC}) appearing in the \textsf{SK} and \textsf{VF} iterations, respectively, are composed of notoriously ill-conditioned
Vandermonde and Cauchy matrices. For instance, the spectral condition number $\kappa_2(V)=\|V\|_2 \|V^{-1}\|_2$
of an arbitrary real $n\times n$ Vandermonde matrix
 exceeds $2^{n-2}/\sqrt{n}$. For example, $\kappa_2(V)>10^{28}$ for $n=100$;
see, e.g., \cite{Tyrtyshnikov-94-HowBad,Beckermann-2000-CondNumbersVandermonde} for details.
Cauchy matrices can also be similarly as badly conditioned. The Hilbert matrix is the most famous one; for the
Hilbert matrix $\mathrm{Hilb}_{100}$ of order $100$, $\kappa_2(\mathrm{Hilb}_{100})>10^{150}$.
In addition to being  already ill-conditioned, the Cauchy matrices arising in \textsf{VF} appear  with additional scalings, as
$\Delta^{(k)}\AC^{(k)}=\Delta^{(k)} \begin{pmatrix} \Cauchy^{(k)} & D_\xi\Cauchy^{(k)} \end{pmatrix}$, where
$\Delta^{(k)}$ is as defined in (\ref{eq:Bmatrix}),
$\Cauchy^{(k)}$ is a  Cauchy matrix defined as $C_{ij}=\frac{1}{\xi_i-\lambda_j}$ for $i=1,\ldots,\ell$ and $j=1,\ldots,r$ and
$D_\xi$ is a diagonal matrix with  $(D_\xi)_{kk}= - H(\xi_k)$ for  $k=1,\ldots,\ell$.
 The diagonal
matrices $\Delta^{(k)}$ and $D_\xi$ can also be arbitrarily ill--conditioned. For instance, if  $H(s)$  has a pole in the vicinity
of $\xi_j$, then $|(D_\xi)_{jj}|=|H(\xi_j)|$ might be very large, especially much larger than $|H(\xi_i)|$ where $|\xi_i|$ is big and thus
$|H(\xi_i)|$ is small since $H(s)$ is assumed to be strictly proper.
{Hence,  the {LS} problem contains potentially
extremely ill-conditioned coefficient matrices, and
the normal equations approach, used in the early development of LS rational approximations \cite{Levy-1959,Sanathanan-Koerner-1963},
is in general not feasible.} One of the key improvements of  \textsf{VF}  \cite{Gustavsen-Semlyen-1999} is indeed
removing the scaling by $\Delta^{(k)}$ and using the unscaled matrix $\AC^{(k+1)}=\begin{pmatrix} \Cauchy^{(k+1)} & D_\xi\Cauchy^{(k+1)} \end{pmatrix}$
instead. However, this matrix still remains ill-conditioned.

Although \textsf{VF} and \textsf{SK} iteration perform effectively for smaller $r$ values and for not-too-pathological distributions of nodes and poles,
the high condition number of the underlying Cauchy and Vandermonde matrices has been recognized
as a serious obstacle for robust computations with higher order approximants on wider frequency ranges.
A discussion on how this ill-conditioning affects the quality of the approximation of the \textsf{SK} iterations, including illustrative examples,
is given in \cite{Soysal-Semlyen-1993}, where the authors demonstrate that equilibrating the columns of the {LS} coefficient matrix
in many cases dramatically improves the accuracy. Another similar preconditioning technique is frequency scaling
proposed in \cite{PintelonK05}.  In this section, we will propose a regularization-based approach to remedy ill-conditioning.

\subsubsection*{\textsf{RegVF}: Regularized Vector Fitting}
Increasing the order of the approximant naturally increases the potential of better approximation, but unfortunately only in theory. To illustrate this point, we continue the numerical experiment of Example \ref{Example:Sobolev-VF-Beam}, use the \emph{Beam} example and increase the order of the approximant from $r=40$ to
$r=80$. Recall that in Example \ref{Example:Sobolev-VF-Beam} with $r=40$, \textsf{VF} leads to a relative $\Hardy_2$ error of  $4.84\cdot 10^{-2}$.
However, when we increase $r$ to $r=80$, the relative error $\Hardy_2$ of \textsf{VF} increases to $1.31\cdot 10^2$. Of course, this apparent numerical divergence is solely due to the ill-conditioned {LS} problems, and in this context even the dimension $r=80$ can be considered large.

One possible cure is to regularize the solution. Towards this goal, we introduce the concept of \emph{Regularized Vector Fitting}, \textsf{RegVF}.
In \textsf{RegVF}, we  augment the {LS} coefficient matrix and replace the
original problem (\ref{eq:VF}) with
\begin{equation}\label{eq:VF:regularized}
\left\|  \begin{pmatrix} \AC^{(k+1)} \cr \begin{smallmatrix} \eta_1 I_r & 0 \cr 0 & \eta_2 I_r\end{smallmatrix}\end{pmatrix}\widetilde{x}^{(k+1)} - \begin{pmatrix} h \cr 0_{2 r\times 1}\end{pmatrix} \right\|_2\rightarrow\min,\;\; k = 0, 1, 2, \ldots,
\end{equation}
where $\eta_1$ and $\eta_2$ are the appropriately chosen regularization parameters. In a similar manner, we also regularize the final {LS} solution for the residue identification step. For the Beam model with $r=80$ and for the same nodes, this modification together with the choices of
$\eta_1=10^{-16}$, $\eta_2=\sqrt{\textsf{eps}}$ reduces  the relative error  from $1.31\cdot 10^2$
to $1.48\cdot 10^{-3}$. Needless to say, finding optimal regularization parameters in practice is far from trivial, because the
\emph{backslash} {LS} solver and the \textsf{svd()} function in Matlab are not a match for the highly ill-conditioned Cauchy-type matrices. For the sake of brevity, we omit the details to be included  in \cite{Drmac-Cauchy-SVD-2013}.

\subsubsection*{A note on row scaling}
To remedy ill-conditioning, in addition to column scaling, Soysal and Semlyen \cite{Soysal-Semlyen-1993} proposed other approaches such as frequency shifting and row scaling. We note that preconditioning by row scaling in the context of {LS} may not be allowed,
because it overrides carefully determined row weighting of a quadrature formula
(see \S \ref{SS=SamplingviaQuadrature}, \S \ref{SS=Sobolev-VF}), or row scaling designed to cope with measurement noise, see \S \ref{SS=noisy}.
The ill-conditioning induced by row-weighting can be partially overcome if the QR factorization is
computed with the full pivoting introduced by Powell and Reid \cite{Powell-Reid-1968} and analyzed by Cox and Higham \cite{cox-hig-98}.
Therefore, if row scaling is an issue, before using the \emph{backslash} {LS} solver in \textsf{VF},  we propose the following equally good yet more efficient simplified variant  of Powell-Reid pivoting,  due to {\AA}ke Bj\"{o}rck:

\begin{verbatim}

function x = LS_solve( A, b )
m = size(A,1) ; D = zeros(m,1) ;  for i = 1 : m, D(i) = norm(A(i,:),inf) ;  end
[~,P] = sort(D,'descend') ; A = A(P,:) ; b = b(P) ; x = A \ b ;

\end{verbatim}

\begin{remark}
{\em
It is well known that using orthonormal basis functions improves numerical stability of
approximation methods. For rational approximation schemes such as  \textsf{VF}, several authors have
developed methods based on orthogonal rational functions, e.g., \cite{Akcay-Ninness-1999,Deschrijver-Haegeman-Dhaene-2007}.
Examples where the orthonormal vector fitting (\textsf{OrthVF}) can outperform  \textsf{VF} are given in \cite{Antonini-Deschrijver-Dhaene-2006}.
However, this is still an open debate as Gustavsen \cite{Gustavsen-EMC-2007} points out that careful
implementation of \textsf{VF} with suitably chosen initial poles matches the performances of  \textsf{OrthVF}  on the same
examples used in \cite{Antonini-Deschrijver-Dhaene-2006}.
}
\end{remark}

\section{Vector Fitting using noisy data}\label{SS=noisy}
The starting point for the rational approximation framework we consider in this paper is a set of transfer function measurements/evaluations. Even though so far we have only considered noise-free data and even though a complete analysis of the underlying framework for \textsf{VF} in the presence of noise is not the main focus of this paper, in this section we provide a new formulation for \textsf{VF} for noisy data and illustrate that the pole reallocation feature of \textsf{VF} leads to a powerful mechanism for removing noise asymptotically as the iteration advances. We also propose a new numerical linear algebra framework for the noisy data case, and pose some challenging problems for future research.

\subsection{A mixed total least squares framework}
Suppose that in (\ref{GenVFprob}), instead of exact values $h_i = H(\xi_i)$, we have
 noisy measured  data:  $\widetilde{h}_i = h_i + \delta h_i$.  Assuming that measurement errors are
uncorrelated,
the proper formulation of the new LS problem is
\begin{equation}\label{eq:H=n/d:noisy}
\mbox{Find}\;\; H_r(s)= \frac{n(s)}{d(s)} \equiv  \frac{\sum_{j=0}^{r-1}\alpha_js^j }{ 1+ \sum_{j=1}^r \beta_j s^j}\;\;\mbox{such that}\;\;
\sum_{i=1}^\ell w_i^2 \left| \frac{n(\xi_i)}{d(\xi_i)} - \widetilde{h}_i \right|^2 \longrightarrow \min ,
\end{equation}
where the weight $w_i$ is the reciprocal of the standard deviation for the $i$th measurement - information that can be considered
as part of the measurement and is essential in guiding the approximation process. Neglecting the weights $w_i$ corresponds
to assuming the same variance across all measurements. Such an assumption is generally not realistic;  particularly when the measurements, $\widetilde{h}_i$, span a large range of values. This, in turn, will degrade the performance of \textsf{VF}, causing it hopelessly to try to
fit the noise.

Statistical properties of the errors, $\delta h_i$, can generally be obtained through repeated measurements, e.g. with periodic excitation,
and depending on the model (see e.g. \cite[\S IV.]{Pintelon-at-al-1994}), various formulations are obtained.
 In general, if we set 
 $$
 \bfeps=[~\eps_1,\eps_2,\ldots,\eps_\ell~]^T \quad \mbox{with}\quad \eps_i=\frac{n(\xi_i)}{d(\xi_i)}-\widetilde{h}_i \quad\mbox{for}~~ i=1,\ldots,\ell, 
   $$
  the problem  (\ref{eq:H=n/d:noisy})
can be re-formulated as $\|W \bfeps\|_2\rightarrow\min$, where $W$ is the inverse Cholesky factor of a positive definite variance-covariance matrix.
We assume for simplicity that errors are uncorrelated so that $W$ is diagonal,  and that further the individual error variances can be estimated reliably. 
Here we focus on the numerical linear algebra aspects of the problem. For  details on stochastic estimation of transfer functions, see, e.g.,
\cite{Kollar-1993,Pintelon-at-al-1994,DeVries-VanDerHof-1995,Pintelon-Schoukens-Rolain-13IFAC}.

The \textsf{SK} iteration, which forms the basis for \textsf{VF}, now takes the weighted form
$$
\left\| W \mathrm{diag}(\frac{1}{|d^{(k)}(\xi_i)|}) \left( \begin{smallmatrix} n^{(k+1)}(\xi_1) - d^{(k+1)}(\xi_1) \widetilde{h}_1\cr
\vdots \cr n^{(k+1)}(\xi_\ell) - d^{(k+1)}(\xi_\ell) \widetilde{h}_{\ell}\end{smallmatrix}\right)\right\|_2 \!\! \equiv\!
 \| W D^{(k)} (\widetilde{\AC}^{(k)} x^{(k+1)} - \widetilde{h})\|_2 \longrightarrow \min,  \;k=0,1,\ldots.
$$
In \textsf{VF}, due to pole relocation, the scaling factors $1/|d^{(k)}(\xi_i)|$ are dropped and $D^{(k)}\equiv I$. 
 The LS objective is $\|W (\widetilde{\AC}^{(k+1)} \widetilde{x}^{(k+1)} - \widetilde{h})\|_2\rightarrow\min$. 
 To ease the growing notational burden,
we drop the iteration index $k$ and set
$\AC \equiv \widetilde{\AC}^{(k+1)}$ and $x\equiv \widetilde{x}^{(k+1)}$. (If needed, we may assume that $k$ is big enough, so that the \textsf{VF} iterations
have reached numerical convergence.)
Compare the original LS problem in (\ref{eq:H=n/d:noisy}) with this linearized version.
Note that, in the process of linearization,
 noise that had appeared only in the right-hand side of (\ref{eq:H=n/d:noisy}) now enters the coefficient matrix,
leading to the minimization problem:  $\| W ( \AC x- (h+\delta h) )\|_2\rightarrow\min$,
where $\AC = \AC_{noise\rule{0.1cm}{0.4pt}free} + \delta \AC$. 

Since the tacit assumption of LS approximation is that only the right-hand side is contaminated with noise,
 a Total Least-Squares (TLS) formulation \cite{Golub-VanLoan-1980} appears to be more appropriate to this setting than the more typical LS formulation.
More precisely, allowing for noisy data in (\ref{eq:LS}), (\ref{eq:VF}), (\ref{eq:B=Vandermonde})
will lead to mixed LS/TLS problems.
Notice that from the definition (\ref{eq:A=PC}) only the last $r$ columns of the matrix
$\AC$ can be contaminated by noise, since $\xi_i$ and $\pol_j$ are considered exact.   
Thus, the perturbation $\delta \AC$ due to noise is
structured and closely related to the perturbation $\delta h$:
\begin{equation}\label{eq:TLS:structured_dA}
\delta \AC  = \begin{pmatrix} 0 & F \end{pmatrix},\;\;
F = \left(\frac{\delta h_i}{\xi_i-\lambda_j}\right)_{i,j=1,1}^{\ell,r} \equiv (\delta h \mathbf{e}^T)\circ C,\;\;
C_{ij}=\frac{1}{\xi_i-\lambda_j},\;\;\mathbf{e}^T = (1\; 1\; \ldots 1),
\end{equation}
where $\circ$ again denotes the Hadamard product. Note that $F$ has rank-one displacement
structure, i.e., it satisfies a Sylvester equation with a rank-one nonhomogeneity: 
$$\Xi F - F \Lambda = \delta h \,\mathbf{e}^T,\quad\mbox{ with }
\Xi = \mathrm{diag}(\xi_i)_{i=1}^\ell\ \mbox{ and }\ \Lambda = \mathrm{diag}(\pol_j)_{j=1}^r.
$$

We first consider how  \textsf{VF} fits into this general TLS framework and show that pole-relocation that is intrinsic to \textsf{VF} has useful additional consequences in this setting.
Recall that the minimization of $\| W(\AC x-\widetilde{h})\|_2$ can be
equivalently formulated as
\begin{equation} \label{eq:ls_equiv}
\| W \mathbf{r}\|_2\rightarrow\min, \;\;\mbox{subject to}\;\; \widetilde{h}+\mathbf{r}\in\mathrm{Range}(\AC).
\end{equation}
One may find the solution to (\ref{eq:ls_equiv})  by seeking the minimal change,
$\widetilde{h} \mapsto \widetilde{h} + \mathbf{r}$, 
(as measured in a $W$-weighted norm)
such that $\AC x = \widetilde{h} + \mathbf{r}$.
In the general TLS framework, 
a minimal change $\begin{pmatrix} \Delta \AC &  \mathbf{\widehat{r}}\end{pmatrix}$ is determined
(as measured now by weighted matrix norm: $\| W \begin{pmatrix} \Delta \AC &  \mathbf{r}\end{pmatrix} T\|_F$)
such that $(\AC+\Delta \AC) \widehat{x} = \widetilde{h}+\mathbf{\widehat{r}}$.
If the entries of $\begin{pmatrix} \Delta \AC &  \mathbf{r}\end{pmatrix}$ are uncorrelated, then $W$ and $T=\mathrm{diag}(t_i)_{i=1}^{2r+1}$ are diagonal matrices. For a detailed and instructive discussion on scaling, see \cite[\S 3.6.2]{VanHuffel-TLS-1991}.

When there is no structural requirement on the perturbation $\Delta \AC$, the TLS solution is computed as follows \cite{Golub-VanLoan-1980}:
Let $ G\equiv W \begin{pmatrix}  \AC &  \widetilde{h}\end{pmatrix} T = U \Sigma V^*$ be the SVD, and assume for simplicity that the smallest
singular value $\sigma_{2r+1}>0$ is simple, with the corresponding singular vectors $u_{2r+1}$ (left) and $v_{2r+1}$ (right). Further assume that the last
component of $v_{2r+1}$ is nonzero; i.e.
$v_{2r+1} = \left(\begin{smallmatrix}z \cr \eta \end{smallmatrix}\right)
$
where $z \in \Cplx^{2r}$, $\eta \in \Cplx$ with $\eta \neq 0$ .
Then,  the minimal perturbation $\begin{pmatrix} \Delta \AC &  \mathbf{\widehat{r}}\end{pmatrix}$ and the corresponding
solution $\widehat{x}$ are given explicitly as
\begin{equation}\label{eq:TLS-solution}
\begin{pmatrix} \Delta \AC \! & \!  \mathbf{\widehat{r}}\end{pmatrix} = - \sigma_{2r+1}W^{-1} u_{2r+1}v_{2r+1}^* T^{-1}, \;\;
\widehat{x}= \frac{-1}{\eta \,t_{n+1}}\mathrm{diag}(t_i)_{i=1}^{2r}\, z.
\end{equation}
For more details on the solution procedure see \cite[Algorithm 3.1]{VanHuffel-TLS-1991}.

Considering the special structure (\ref{eq:TLS:structured_dA}) of the perturbation in our setting,
we  formulate the following structured mixed LS/TLS problem: With $W$ as before and $T=\mathrm{diag}(t_i)_{i=1}^{r+1}$, solve
\begin{equation}\label{eq:TLS}
\| W \begin{pmatrix} E & \mathbf{r} \end{pmatrix} T \|_F\longrightarrow \min, \mbox{~subject to}\;\; h+\mathbf{r} \in\mathrm{Range}(\AC+\begin{pmatrix} 0 & E\end{pmatrix})
\;\mbox{~and~}\; \Xi E - E \Lambda = \mathbf{r} e^T .
\end{equation}
Since $W \begin{pmatrix} E & \mathbf{r} \end{pmatrix} T = (T\otimes W) \left(\begin{smallmatrix} \mathrm{vec}(E) \cr \mathbf{r} \end{smallmatrix}\right)$,
the objective function in (\ref{eq:TLS}) can be re-written as
\begin{equation}\label{eq:TLS(r)}
\|W \begin{pmatrix} E & \mathbf{r} \end{pmatrix} T \|_F^2
= \left\| \left(\begin{smallmatrix} t_1 W &  &  \cr
 & \ddots &  \cr & & t_{r+1} W\end{smallmatrix}\right) \left( \begin{smallmatrix} \mathbf{r}\circ C(:,1) \cr \vdots \cr
  \mathbf{r}\circ C(:,r) \cr \mathbf{r} \end{smallmatrix}\right)\right\|_F^2
  = t_{r+1}^2 \|W\mathbf{r}\|_2^2 + \sum_{j=1}^r t_j^2 \| W (\mathbf{r}\circ C(:,j))\|_2^2.
\end{equation}
Depending on $T$ and the distribution of the $\xi_i$'s and the $\lambda_j$'s, minimizing the above expression is
related to minimizing $\|W\mathbf{r}\|_2$.
For example consider the case of the structured perturbations $E=\mathbf{r}e^T$ (not of the type we have here\footnote{Take very low frequencies and all
$\lambda_j$'s around $-1$, or consider frequency scaling to approximate the desired structure.}, but instructive to consider) and $T=I$ (reasonable in this situation). In this case,
the minimization problem $\| W \begin{pmatrix} E & \mathbf{r} \end{pmatrix} T \|_F\longrightarrow \min$  is indeed equivalent to $\|W \mathbf{r}\|_2 \longrightarrow \min$. But in general, developing a theory for solvability and a robust numerical algorithm for solving
(\ref{eq:TLS}) is  a challenging problem.
If we assume to have found the minimizing $E$ and $\mathbf{\widehat{r}}$, the solution $\widehat{x}$ is, then, defined by
$(\AC+\begin{pmatrix} 0 & E\end{pmatrix})\widehat{x} = h + \mathbf{\widehat{r}}$. Otherwise (e.g., if (\ref{eq:TLS}) has no solution), ignore the rank-one displacement structure,
and use the solution of the mixed LS/TLS problem, computed using \cite[Algorithm 3.2]{VanHuffel-TLS-1991}, or the solution (\ref{eq:TLS-solution}) of
the TLS problem. With these two cases, we obtain two new variants of \textsf{VF}, denoted by \textsf{LS/TLS-VF} and \textsf{TLS-VF}, respectively.
As stated above, the special structure of the coefficients matrices in  \textsf{LS/TLS-VF} makes it a  challenging problem. Assuming the existence of a solution to these structured problems, 
numerically sound implementations to obtain the solution will depend on developing accurate numerical linear algebra
tools, e.g., accurate SVD computations, for  Cauchy-type matrices that arise in \textsf{VF}.
\subsection{\textsf{VF} as an asymptotic LS/TLS procedure}
We compare the LS solution in  step $k$ of \textsf{VF} to the solution of $( \AC + \begin{pmatrix} 0 & E \end{pmatrix} ) \hat{x} = h + \mathbf{\hat{r}}$ in  step $k$ of \textsf{LS/TLS-VF}.
Recall that the LS problem minimizes $\|W\mathbf{r}\|_2$, and the solution $x$ satisfies $\AC x = h+\mathbf{r}$.
If we partition $x$ as $x=\left(\begin{smallmatrix} \bfphi \cr \bfvarphi\end{smallmatrix}\right)$, and add the LS/TLS error in $\AC$,
we obtain
$$
( \AC + \begin{pmatrix} 0 & E \end{pmatrix} ) x = h + \mathbf{r} + E\bfvarphi,\;\;
$$ 
where  in the case of (\ref{eq:TLS}), $E\bfvarphi = ((\mathbf{\widehat{r}} e^T)\circ C)\bfvarphi = \mathbf{\widehat{r}}\circ(C\bfvarphi)).
$
Since  the $\bfvarphi$-part of $x$ in \textsf{VF} converges to zero,
it holds that $\|E\bfvarphi\|_2 \leq \|\mathbf{\widehat{r}}\|_2 \|C\bfvarphi\|_2$ is small relative to $\|\mathbf{\widehat{r}}\|_2$.
Recall that  minimizing $\|W\mathbf{r}\|_2$ and $\|W\begin{pmatrix} E & \mathbf{r}\end{pmatrix}\|_2$ from (\ref{eq:TLS(r)})
are related.
This  reveals another silent, yet powerful,
feature of \textsf{VF} that makes it much more than a reformulation of SK iteration. Asymptotically, thanks to the persistent change of representation through pole relocation, \textsf{VF} is (approximately) performing structured mixed LS/TLS minimization.

\subsection{A diagonally-restricted LS/TLS formulation}
In the previous section, we discussed the TLS approach to \textsf{VF} in the presence of noise and  by comparing with a generic LS/TLS procedure we showed
that the original formulation of \textsf{VF} will approximately solve the LS/TLS problem.  In this section, 
we will introduce a new framework, that we believe is the
correct formulation to perform \textsf{VF} in the presence of noise. 

It follows from (\ref{eq:ls_equiv}) and the definition of $C$ in (\ref{eq:TLS:structured_dA}) that the objective function $\|W \mathbf{r} \|_2$ 
is minimized with respect to
the condition
\begin{displaymath}
( \begin{pmatrix} C & \mathrm{diag}(\widetilde{h}) C \end{pmatrix} + \begin{pmatrix} 0 & \mathrm{diag}(\mathbf{r}) C\end{pmatrix} ) x = \widetilde{h} + \mathbf{r},
\;\mbox{i.e.}\; (\begin{pmatrix} C & \mathrm{diag}(\widetilde{h}) C & \widetilde{h} \end{pmatrix}
+ \mathrm{diag}(\mathbf{r}) \begin{pmatrix} 0 &  C & \mathbf{e} \end{pmatrix})\left(\begin{smallmatrix} x \cr -1\end{smallmatrix}\right) = 0 .
\end{displaymath}
Define $Z = \begin{pmatrix} C & \mathrm{diag}(\widetilde{h})C & \widetilde{h} \end{pmatrix}$, $S=\begin{pmatrix} 0 & C & \mathbf{e}\end{pmatrix}$.
Then we propose a diagonally-restricted LS/TLS formulation in step $k$ of \textsf{VF}, stated as follows:   Find  $x=\left(\begin{smallmatrix} \bfphi \cr \bfvarphi\end{smallmatrix}\right)$ as the solution (if it exists) of the
constrained minimization problem
\begin{equation}\label{eq:challenge}
\min_{\mathbf{r}} \{ \|W \mathbf{r}\|_2 \; :\; (Z + \mathrm{diag}(\mathbf{r}) S) \left( \begin{smallmatrix}  x \cr -1 \end{smallmatrix}\right) =0\}.
\end{equation}
Set $\hat{Z}=WZ$ and note that $\hat{R} \equiv W \mathrm{diag}(\mathbf{r})$ is the minimal perturbation $\hat{Z} \leadsto \hat{Z} + \hat{R}S$ that makes $\hat{Z}$ singular. Apart from the special structure of $\hat{R}$, this is related to the notion of  restricted singular values \cite{Zha:1991:RSV:104085.104100} of the
matrix triplet $(\hat{Z}, I, S)$:
$
\sigma_k (Z,I,S) = \min \{ \|\Theta\|_2 \; :\; \mathrm{rank}(Z + I \Theta S) \leq k-1\}.
$
This connection, explained in \cite{VanHuffel:1991:RTL:105724.105732}, together with  methods presented in \cite{Beck:2007:MTL:1265603.1265657}    form the starting point for attacking the problem of  solving  (\ref{eq:challenge}) numerically. These issues will be explored in future work.

\section{Conclusions and Future Directions}
\textsf{VF} has been widely and successfully used.
Notwithstanding substantial advances and many successful applications of the method,
analytical justification of its success from numerical linear algebra and rational approximation perspectives has been missing.
This work is a step toward filling that gap. Noting first that a small \textsf{VF} fitting error 
does not necessarily correspond to small approximation error, we
 related \textsf{VF} to discrete $\Hardy_2$ minimization and proposed a quadrature-based version, called \textsf{QuadVF}, which improves performance dramatically. We extended \textsf{VF} to include a derivative penalty in the LS minimization by performing a quadrature-based discretization of a continuous Sobolev norm, leading to a method we called \textsf{SobVF}.  We also analyzed several practical and numerical issues arising in \textsf{VF} using a rigorous theoretical framework. For example, we  analytically justified the mechanism behind the mirroring of unstable poles during  \textsf{VF}. We investigated the numerical convergence of \textsf{VF} and illustrated different scenarios for divergence that could arise.  One of the major numerical issues that can arise in \textsf{VF} is the appearance of highly ill-conditioned coefficient matrices; we offered a remedy via regularization. Even though most of our analyses assume exact data, we briefly considered \textsf{VF} in the case of noisy data and
showed the utility of a mixed LS/TLS framework.

Aside from the newly developed, effective methods that are described here,  our work also
leads to a variety of challenging theoretical and practical issues that will be explored in 
subsequent work.  These include: effective regularization techniques, refined computational
strategies for the diagonally-restricted LS/TLS formulation of \textsf{VF} introduced in (\ref{eq:challenge}), extensions to the multiple-input/multiple-output case via tangential interpolation (reflecting the structure of the underlying $\Hardy_2$ setting), and adaptive determination of apppropriate reduced dimension (say, informed by the Loewner framework developed in \cite{Mayo-2007,AIL11,IonitaAntoulas2013,lefteriu2010new}).

\bibliography{ModRed}

\begin{thebibliography}{10}

\bibitem{Akcay-Ninness-1999}
{\sc H.~Ak\c{c}ay and B.~Ninness}, {\em Orthonormal basis functions for
  modelling continuous-time systems}, Signal Processing, 77 (1999),
  pp.~261--274.

\bibitem{Antonini-Deschrijver-Dhaene-2006}
{\sc G.~Antonini, D.~Deschrijver, and T.~Dhaene}, {\em A comparative study of
  vector fitting and orthonormal vector fitting techniques for {EMC}
  applications}, in Proc. Int. Symp. Electromagnetic Compatibility, IEEE, 2006,
  pp.~6--11.

\bibitem{Ant2010imr}
{\sc A.~Antoulas, C.~Beattie, and S.~Gugercin}, {\em Interpolatory model
  reduction of large-scale dynamical systems}, in Efficient Modeling and
  Control of Large-Scale Systems, J.~Mohammadpour and K.~Grigoriadis, eds.,
  Springer-Verlag, 2010, pp.~2--58.

\bibitem{AIL11}
{\sc A.~Antoulas, A.~Ionita, and S.~Lefteriu}, {\em On two-variable rational
  interpolation}, Linear Algebra and Its Applications, 436 (2012),
  pp.~2889--2915.

\bibitem{ASG01}
{\sc A.~Antoulas, D.~Sorensen, and S.~Gugercin}, {\em A survey of model
  reduction methods for large scale systems}, Contemporary Mathematics, AMS
  Publications, 280 (2001), pp.~193--219.

\bibitem{Beattie_Drmac_Gugercin:2011_FreqResp}
{\sc C.~Beattie, Z.~Drma\v{c}, and S.~Gugercin}, {\em A note on shifted
  {Hessenberg} systems and frequency response computation}, ACM Trans. Math.
  Softw., 38 (2012), pp.~12:1--12:16.

\bibitem{Beattie-Drmac-Gugercin:2013-QIRKA}
{\sc C.~Beattie, Z.~Drma\v{c}, and S.~Gugercin}, {\em A reproducing kernel
  framework for optimal $\mathcal{H}_2$ model order reduction}, tech. rep.,
  University of Zagreb and Virginia Tech at Blacksburg, 2013.

\bibitem{Beattie_Gugercin::RealizationIndependent}
{\sc C.~Beattie and S.~Gugercin}, {\em Realization--independent {$H_2$}
  approximation}, in Proceedings of the 51st IEEE Conference on Decision \&
  Control, IEEE, 2012, pp.~4953--4958.

\bibitem{Beck:2007:MTL:1265603.1265657}
{\sc A.~Beck}, {\em The matrix-restricted total least-squares problem}, Signal
  Process., 87 (2007), pp.~2303--2312.

\bibitem{Beckermann-2000-CondNumbersVandermonde}
{\sc B.~Beckermann}, {\em The condition number of real {Vandermonde}, {Krylov}
  and positive definite {Hankel} matrices}, Numer. Math., 85 (2000),
  pp.~553--577.

\bibitem{Bhatia-MatrixAnalysis-1997}
{\sc R.~Bhatia}, {\em Matrix {A}nalysis}, Springer, 1997.
\newblock Graduate Texts in Mathematics, 169.

\bibitem{boyd1982optimization}
{\sc J.~P. Boyd}, {\em The optimization of convergence for chebyshev polynomial
  methods in an unbounded domain}, Journal of computational physics, 45 (1982),
  pp.~43--79.

\bibitem{Boyd-1987}
{\sc J.~P. Boyd}, {\em Exponentially convergent {Fourier-Chebshev} quadrature
  schemes on bounded and infinite intervals}, Journal on Scientific Computing,
  2 (1987), pp.~99--109.

\bibitem{NICONET-report}
{\sc Y.~Chahlaoui and P.~V. Dooren}, {\em A collection of benchmark examples
  for model reduction of linear time invariant dynamical systems}, tech. rep.,
  SLICOT Working Note 2002-2, 202.

\bibitem{Chen-Zheng-Fang-2003}
{\sc H.~Chen, J.~Zheng, and J.~Fang}, {\em Multipoint moment matching based
  model generation for complex systems}, in Electrical Performance of
  Electronic Packaging (Princeton, NJ), IEEE, 2003, pp.~299--302.

\bibitem{cox-hig-98}
{\sc A.~J. Cox and N.~J. Higham}, {\em Stability of {Householder QR}
  factorization for weighted least squares problems}, in Numerical Analysis
  1997, Proceedings of the 17th Dundee Biennial Conference, D.~F. Griffiths,
  D.~J. Higham, and G.~A. Watson, eds., vol.~380 of Pitman Research Notes in
  Mathematics, A W Longman, 1998, pp.~57--73.

\bibitem{Deschrijver-Gustavsen-2007}
{\sc D.~Deschrijver and B.~Gustavsen}, {\em Advancements in iterative methods
  for rational approximation in the frequency domain}, IEEE Transactions on
  Power Delivery, 22 (2007), pp.~1633--1642.

\bibitem{Deschrijver-Haegeman-Dhaene-2007}
{\sc D.~Deschrijver, B.~Haegeman, and T.~Dhaene}, {\em Orthonormal vector
  fitting: a robust macromodeling tool for rational approximation of requency
  domain responses}, IEEE Transactions on Advanced Packaging, 30 (2007),
  pp.~216--225.

\bibitem{Deschrijver-Knockaert-Dhaene-2010}
{\sc D.~Deschrijver, L.~Knockaert, and T.~Dhaene}, {\em Improving robustness of
  vector fitting to outliers in data}, Electronics Letters, 46 (2010),
  pp.~1--2.

\bibitem{Deschrijver-Mrozowski-Dhaene-Zutter-2008}
{\sc D.~Deschrijver, M.~Mrozowski, T.~Dhaene, and D.~D. Zutter}, {\em
  Macromodeling of multiport systems using a fast implementation of the vector
  fitting method}, IEEE Microwave and Wireless Components Letters, 18 (2008),
  pp.~383--385.

\bibitem{Drmac-Cauchy-SVD-2013}
{\sc Z.~Drma\v{c}}, {\em Accurate {SVD} of {Cauchy}-type matrices and
  applications}, tech. rep., University of Zagreb, 2014.

\bibitem{drm-ves-VW-2}
{\sc Z.~Drma\v{c} and K.~Veseli\'{c}}, {\em New fast and accurate {Jacobi}
  {SVD} algorithm: {II}.}, SIAM J. Matrix Anal. Appl., 29 (2008),
  pp.~1343--1362.

\bibitem{Feldmann:2006:ELC:2298628.2302601}
{\sc P.~Feldmann and R.~W. Freund}, {\em Efficient linear circuit analysis by
  {Pade} approximation via the {Lanczos} process}, Trans. Comp.-Aided Des.
  Integ. Cir. Sys., 14 (1995), pp.~639--649.

\bibitem{Gautschi-2004}
{\sc W.~Gautschi}, {\em Orthogonal Polynomials, Computation and Approximation},
  Oxford Univeristy Press, 2004.
\newblock Numerical Mathematics and Scientific Computation.

\bibitem{Golub-VanLoan-1980}
{\sc G.~H. Golub and C.~F.~V. Loan}, {\em An analysis of the total least
  squares problem}, SIAM J. Numer. Anal., 17 (1980), pp.~214--224.

\bibitem{gugercin2001iss}
{\sc S.~Gugercin, A.~Antoulas, and N.~Bedrossian}, {\em Approximation of the
  international space station 1r and 12a models}, in Decision and Control,
  2001. Proceedings of the 40th IEEE Conference on, vol.~2, IEEE, 2001,
  pp.~1515--1516.

\bibitem{Gugercin_Antoulas_Beattie:2008:IRKA}
{\sc S.~Gugercin, A.~C. Antoulas, and C.~Beattie}, {\em $\mathcal{H}_2$ model
  reduction for large-scale linear dynamical systems}, SIAM J. Matrix Anal.
  Appl., 30 (2008), pp.~609--638.

\bibitem{Gustavsen-EMC-2007}
{\sc B.~Gustavsen}, {\em Comments on "a comparative study of vector fitting and
  orthonormal vector fitting techniques for emc applications"}, in Proceedings
  of the 18th Int. Z{u}rich Symposium on Electromagnetic Compatibility, Munich
  2007, IEEE, 2006, pp.~131--134.

\bibitem{Gustavsen-2006}
\leavevmode\vrule height 2pt depth -1.6pt width 23pt, {\em Improving the pole
  relocating properties of vector fitting}, IEEE Transactions on Power
  Delivery, 21 (2006), pp.~1587--1592.

\bibitem{Gustavsen-Semlyen-1999}
{\sc B.~Gustavsen and A.~Semlyen}, {\em Rational approximation of frequency
  domain responses by vector fitting}, IEEE Transactions on Power Delivery, 14
  (1999), pp.~1052--1061.

\bibitem{Hendrickx06someremarks}
{\sc W.~Hendrickx, D.~Deschrijver, and T.~Dhaene}, {\em Some remarks on the
  {Vector Fitting} iteration}, in in Mathematics in Industry, Springer-Verlag,
  2006, pp.~134--138.

\bibitem{Hendrickx-Dhaene-2006}
{\sc W.~Hendrickx and T.~Dhaene}, {\em A discussion of {"Rational approximation
  of frequency domain responses by vector fitting"}}, IEEE Transactions on
  Power Systems, 21 (2006), pp.~441--443.

\bibitem{Hochman-Leviatan-White-2012}
{\sc A.~Hochman, Y.~Leviatan, and J.~White}, {\em On the use of
  rational-function fitting methods for the solution of {2D} {Laplace}
  boundary-value problems}, tech. rep., arXiv:1112.1643v2, 2012.

\bibitem{VanHuffel-TLS-1991}
{\sc S.~V. Huffel and J.~Vandewalle}, {\em The Total Least Squares Problem:
  Computational Aspects and Analysis, Frontiers in Applied Mathematics 9},
  SIAM, 1991.

\bibitem{Kalman-1958}
{\sc R.~E. Kalman}, {\em Design of a self-optimizing control system}, Trans.
  ASME, 80 (1958), pp.~468--478.

\bibitem{Kim:2002:QuadratureFirstDerivatives}
{\sc K.~J. Kim, R.~Cools, and L.~G. Ixaru}, {\em Quadrature rules using first
  derivatives for oscillatory integrands}, J. Comput. Appl. Math., 140 (2002),
  pp.~479--497.

\bibitem{Kollar-1993}
{\sc I.~Koll\'{a}r}, {\em On frequency-domain identification of linear
  systems}, IEEE Transactions on Instrumentation and Measurement, 42 (1993),
  pp.~2--6.

\bibitem{lefteriu2010new}
{\sc S.~Lefteriu and A.~Antoulas}, {\em A new approach to modeling multiport
  systems from frequency-domain data}, Computer-Aided Design of Integrated
  Circuits and Systems, IEEE Transactions on, 29 (2010), pp.~14--27.

\bibitem{Lefteriu-Antoulas-2013}
{\sc S.~Lefteriu and A.~C. Antoulas}, {\em Convergence of the vector fitting
  algorithm}, IEEE Transactions on Microwave Theory and Techniques, 61 (2013),
  pp.~1435--1443.

\bibitem{Levy-1959}
{\sc E.~C. Levy}, {\em Complex curve fitting}, IRE Transactions on Automatic
  Control, AC-4 (1959), pp.~37--44.

\bibitem{Mayo-2007}
{\sc A.~J. Mayo and A.~C. Antoulas}, {\em A framework for the solution of the
  generalized realization problem}, Linear Algebra and its Applications, 425
  (2007), pp.~634--662.

\bibitem{mehrmann1996app}
{\sc V.~Mehrmann and H.~Xu}, {\em {An analysis of the pole placement problem.
  I. the single-input case}}, Electronic Transactions on Numerical Analysis, 4
  (1996), pp.~89--105.

\bibitem{MeierLuenberger:1967:Approximation}
{\sc L.~Meier and D.~G. Luenberger}, {\em Approximation of linear constant
  systems}, IEEE Trans. Automat. Control, 12 (1967), pp.~585--588.

\bibitem{IonitaAntoulas2013}
{\sc A.~onita and A.~Antoulas}, {\em Data-driven parametrized model reduction
  in the {L}oewner framework}.
\newblock submitted, 2013.

\bibitem{PintelonK05}
{\sc R.~Pintelon and I.~Koll·r}, {\em On the frequency scaling in
  continuous-time modeling.}, IEEE T. Instrumentation and Measurement, 54
  (2005), pp.~318--321.

\bibitem{Pintelon-at-al-1994}
{\sc R.~Pintelon, Y.~Rolain, J.~Schoukens, and H.~V. hamme}, {\em Parametric
  identification of transfer functions in the frequency domain - a survey},
  IEEE Transactions on Automatic Control, 39 (1994), pp.~2245--2260.

\bibitem{Pintelon-Schoukens-Rolain-13IFAC}
{\sc R.~Pintelon, J.~Schoukens, and Y.~Rolain}, {\em Uncertainty of transfer
  function modeling using prior estimated noise models}, in 13 IFAC Symposium
  on System Identification, Elsevier, 2003, pp.~??1874--1879.

\bibitem{Powell-Reid-1968}
{\sc M.~J.~D. Powell and J.~K. Reid}, {\em On applying {Householder}
  transformations to linear least squares problems}, in Information Processing
  68, Proc. {International} {Federation} of {Information} {Processing}
  {Congress}, Edinburgh, 1968, North Holland, Amsterdam, 1969, pp.~122--126.

\bibitem{Sanathanan-Koerner-1963}
{\sc C.~Sanathanan and J.~Koerner}, {\em Transfer function synthesis as a ratio
  of two complex polynomials}, IEEE Trans. Autom. Control, 8 (1963),
  pp.~56--58.

\bibitem{Soysal-Semlyen-1993}
{\sc A.~O. Soysal and A.~Semlyen}, {\em Practical transfer function estimation
  and its applications to wide frequency range representation of transformers},
  IEEE Transactions on Power Delivery, 8 (1993), pp.~1627--1637.

\bibitem{spanos1992anewalgorithm}
{\sc J.~Spanos, M.~Milman, and D.~Mingori}, {\em A new algorithm for {$L^2$}
  optimal model reduction}, Automatica (Journal of IFAC), 28 (1992),
  pp.~897--909.

\bibitem{Tyrtyshnikov-94-HowBad}
{\sc E.~V. Tyrtyshnikov}, {\em How bad are {Hankel} matrices?}, Numer. Math.,
  67 (1994), pp.~261--269.

\bibitem{VanHuffel:1991:RTL:105724.105732}
{\sc S.~Van~Huffel and H.~Zha}, {\em The restricted total least squares
  problem: Formulation, algorithm, and properties}, SIAM J. Matrix Anal. Appl.,
  12 (1991), pp.~292--309.

\bibitem{DeVries-VanDerHof-1995}
{\sc D.~K.~D. Vries and P.~M. J.~V. den Hof}, {\em Quantification of
  uncertainty in transfer function estimation: a mixed-probabilistic-worst-case
  approach}, Automatica, 31 (1995), pp.~543--557.

\bibitem{wilson1970optimum}
{\sc D.~Wilson}, {\em Optimum solution of model-reduction problem}, Proc. IEE,
  117 (1970), pp.~1161--1165.

\bibitem{Zha:1991:RSV:104085.104100}
{\sc H.~Zha}, {\em The restricted singular value decomposition of matrix
  triplets}, SIAM J. Matrix Anal. Appl., 12 (1991), pp.~172--194.

\end{thebibliography}
\bibliographystyle{siam}

\end{document}